\documentclass[11pt]{article}
\usepackage[letterpaper,margin=1in]{geometry}
\usepackage{amsmath,amssymb,amsthm,mathtools}
\usepackage{hyperref}
\usepackage{tikz}
\usetikzlibrary{arrows.meta,calc}

\hypersetup{
  colorlinks=true,
  linkcolor=blue,
  citecolor=blue,
  urlcolor=blue
}

\newtheorem{theorem}{Theorem}[section]
\newtheorem{lemma}[theorem]{Lemma}
\newtheorem{corollary}[theorem]{Corollary}
\newtheorem{remark}[theorem]{Remark}
\newtheorem{proposition}[theorem]{Proposition}

\DeclareMathOperator{\pf}{pf}
\DeclareMathOperator{\PM}{PerfMatch}
\DeclareMathOperator{\per}{per}
\DeclareMathOperator{\Pf}{Pf}
\DeclareMathOperator{\sign}{sign}

\newcommand{\cM}{\mathcal M}
\newcommand{\pfmax}{\pf_{\max}}

\title{Exponential Lower Bounds for the Pfaffian Number of Graphs}
\author{Priyanshu Pant\\
Indian Institute of Technology Indore\\
\texttt{priyanshupant03@gmail.com}
\and
Ranveer Singh\\
Indian Institute of Technology Indore\\
\texttt{ranveer@iiti.ac.in}}
\date{}
\begin{document}

\maketitle

\begin{abstract}
Galluccio--Loebl and Tesler showed that the perfect-matching polynomial
of a graph embedded in an orientable surface of genus \(g\) can be written
as a linear combination of at most \(4^g\) Pfaffians. We show that, in
general, exponentially many Pfaffians are necessary. More precisely, for
every \(g\ge1\), there exists a graph of orientable genus at most \(g\)
whose perfect-matching polynomial requires at least \((8/3)^g\) Pfaffians
in any such linear representation. In particular, for every even integer
\(n\ge6\), there is a graph on \(n\) vertices with Pfaffian number at least
\((8/3)^{\lfloor n/6\rfloor}\). Moreover, the lower bound is witnessed
even by cubic bipartite matching-covered graphs. 
We prove this by showing that expressing the permanent of an \(n\times n\) matrix
of distinct variables as a linear combination of determinants obtained
by changing signs of its entries requires exponentially many determinants.
As a consequence, we improve a recent linear lower bound on the
Pfaffian number due to Junchaya, Miranda, and Lucchesi to an
exponential lower bound.
\end{abstract}

\section{Introduction}
All graphs considered in this paper are finite and simple, that is, they
have no loops or multiple edges. For a graph \(G\), we write \(V(G)\) and
\(E(G)\) for its vertex set and edge set, respectively. A \emph{perfect matching}
of \(G\) is a set \(M\) of pairwise disjoint edges such that each vertex of
\(G\) is incident with exactly one edge of \(M\). Counting perfect matchings
is a classical problem in graph theory and combinatorics, with
applications in chemistry, statistical physics, and quantum mechanics; see
\cite{LovaszPlummer1986, LucchesiMurty2024}.
Computationally, the problem is hard. Even for bipartite graphs, it is
equivalent to computing the permanent of a \(0\)-\(1\) matrix, which is
\(\#P\)-complete by Valiant's theorem~\cite{Valiant1979}.
For an \(n\times n\) matrix \(A=(a_{ij})\), the \emph{permanent} of \(A\) is
defined as
\[
    \per(A)=\sum_{\sigma\in S_n}\prod_{i=1}^n a_{i,\sigma(i)},
\]
where \(S_n\) is the set of all permutations of \(\{1,2,\ldots,n\}\), whereas the determinant is defined by
\[
    \det(A)=
    \sum_{\sigma\in S_n}
    \sign(\sigma)\prod_{i=1}^n a_{i,\sigma(i)},
\]
where \(\sign(\sigma)\) is \(1\) for even permutations and \(-1\) for odd
permutations. Although the permanent and determinant differ only in the signs
of the same monomials, their computational behavior is strikingly different. The determinant can be computed in \(O(n^{\omega})\) arithmetic operations
using fast matrix multiplication, where \(2\le \omega<2.373\)
\cite{AhoHopcroftUllman1974,AlmanWilliams2021}.
This contrast in computational complexity motivates Pólya's permanent problem~\cite{Polya1913}, which asks
when, for a given \(0\)-\(1\) matrix \(A\), one can change signs of the nonzero
entries to obtain a matrix \(B\) with the same zero pattern such that
\[
    \det(B)=\per(A).
\]
If such a signing is given, then the permanent of \(A\) can be computed in
polynomial time by evaluating the corresponding determinant.
Moreover, for \(0\)-\(1\) matrices, the
existence of such a signing can be recognized, and a suitable signing can
be found, in polynomial time.
This signing problem is equivalent to several combinatorial
and graph-theoretic problems, including the existence of Pfaffian orientations
for bipartite graphs and the even directed cycle problem
\cite{McCuaig2004,RobertsonSeymourThomas1999}.

The graph analogue of the permanent--determinant contrast is the contrast
between the perfect-matching polynomial and Pfaffians.  Let \(G\) be a graph with \(2n\) vertices, and
assign a variable \(x_e\) to each edge \(e\in E(G)\).  The \emph{perfect-matching polynomial} of \(G\) is defined by
\[
    \PM(G)
    =
    \sum_{M\in\cM(G)}
    \prod_{e\in M}x_e,
\]
where \(\cM(G)\) denotes the set of perfect matchings of \(G\). Setting all variables \(x_e=1\), one obtains the number of perfect matchings of \(G\). Thus \(\PM(G)\) is the unsigned sum of the monomials corresponding to perfect
matchings, just as the permanent is the unsigned sum over permutation terms.

In the Pfaffian expansion, an orientation of the graph assigns signs to perfect matchings. Let \(D\) be an \emph{orientation} of \(G\), that is, a
choice of direction for each edge. Fix an ordering \(1,2,\ldots,2n\) of the vertices of \(G\).  If
\(e=ij\in E(G)\), we write \(x_e\) and \(x_{ij}\) interchangeably for the
corresponding edge variable.
The \emph{weighted skew-adjacency matrix} of \(G\) with respect to \(D\), denoted by \(A_D\),
is the \(2n\times 2n\) matrix whose \((i,j)\)-entry is
\[
    (A_D)_{ij}
    =
    \begin{cases}
        x_{ij}, & \text{if }ij\in E(G)\text{ is oriented from }i\text{ to }j,\\
        -x_{ij}, & \text{if }ij\in E(G)\text{ is oriented from }j\text{ to }i,\\
        0, & \text{if }ij\notin E(G).
    \end{cases}
\]
Let
\(
    M:=\{e_1,e_2,\ldots,e_n\}
\)
be a perfect matching of \(G\), and suppose that in the orientation \(D\)
of \(G\), the tail and the head of the edge \(e_i\), for
\(1\le i\le n\), are \(u_i\) and \(v_i\), respectively.  The \emph{permutation of \(M\) in \(D\)}, denoted by
\(\pi_D(M)\), is
\[
    \pi_D(M)=
    \begin{pmatrix}
        1&2&3&4&\cdots&2n-1&2n\\
        u_1&v_1&u_2&v_2&\cdots&u_n&v_n
    \end{pmatrix}.
\]
The sign of \(M\) with respect to \(D\), denoted by \(\sign_D(M)\), is defined as the sign
of the permutation \(\pi_D(M)\). The value of \(\sign_D(M)\) is independent of the order in which the edges of
\(M\) are enumerated. The Pfaffian of the weighted skew-adjacency matrix
\(A_D\) is given by
\[
    \Pf(A_D)
    =
    \sum_{M\in\cM(G)}
    \sign_D(M)\prod_{e\in M}x_e .
\]
Thus the Pfaffian sums over the same perfect matchings as \(\PM(G)\), but
with signs depending on the orientation \(D\). Note that the same perfect matching may have different signs under different
orientations of \(G\).
More generally, let \(A=(a_{ij})\) be any skew-symmetric matrix of order
\(2n\). We associate with \(A\) an oriented weighted graph \(D_A\) on
vertex set \(\{1,\ldots,2n\}\) as follows. For every \(i<j\) with
\(a_{ij}\neq 0\), add the edge \(ij\), orient it from \(i\) to \(j\), and
assign it weight \(a_{ij}\). Then \(A\) is the weighted skew-adjacency
matrix associated with \(D_A\). The Pfaffian of \(A\), denoted by
\(\Pf(A)\), is the Pfaffian of this weighted skew-adjacency matrix.
Cayley's identity~\cite{Cayley1849,Halton1966} states that every
skew-symmetric matrix \(A\) of even order satisfies
\[
    \Pf(A)^2=\det(A).
\]
Thus Pfaffians, like determinants, are computable in polynomial time.

If there exists an orientation \(D\) in which all perfect matchings of \(G\)
have the same sign, then \(\Pf(A_D)\) equals \(\PM(G)\) up to an overall
sign, so \(\PM(G)\), and hence the number of perfect matchings of \(G\),
can be computed efficiently. Such an orientation is called a \emph{Pfaffian orientation}, and a graph admitting such an orientation is called \emph{Pfaffian}. Thus deciding whether a graph is Pfaffian is the graph-theoretic counterpart of Pólya's permanent problem. In both problems, one asks whether suitable signs can turn a signed expansion into the unsigned one.

In 1961, Kasteleyn introduced Pfaffian orientations in connection with problems from statistical physics, and later proved that every planar graph is
Pfaffian~\cite{Kasteleyn1961,Kasteleyn1967}. Together with the work of Fisher
and Temperley, this gave the Fisher--Kasteleyn--Temperley
(FKT) algorithm, which counts perfect matchings in planar graphs in polynomial
time by reducing the computation to a Pfaffian, and hence to a determinant
\cite{Fisher1961,TemperleyFisher1961}.
More generally, whenever a Pfaffian orientation is available, perfect
matchings can be counted in polynomial time. However, not every graph is
Pfaffian. The smallest non-Pfaffian graph is \(K_{3,3}\), meaning that no
orientation of \(K_{3,3}\) makes all perfect matchings have the same sign
\cite{Little1975,NorinLittleTeo2004}.

For bipartite graphs, \(K_{3,3}\) plays a stronger structural role.
Little's theorem states that a bipartite graph is Pfaffian if and only if
it does not contain \(K_{3,3}\) as a matching minor~\cite{Little1975}. Thus \(K_{3,3}\) is not
merely the smallest non-Pfaffian example, but the obstruction to
Pfaffianity in the bipartite setting. This viewpoint has more recently led
to a broader structural theory of matching minors
\cite{GiannopoulouThilikosWiederrecht2023,
      GiannopoulouWiederrecht2024}.
In bipartite graphs, Pfaffian orientations can be recognized and found in
polynomial time by Robertson, Seymour, and Thomas, and independently by
McCuaig
\cite{McCuaig2004,RobertsonSeymourThomas1999}.
For general graphs, the complexity of deciding whether a Pfaffian orientation
exists remains open. Vazirani and Yannakakis showed that several algorithmic
questions around Pfaffian orientations, \(0\)-\(1\) permanents, and even
directed cycles are polynomial-time equivalent
\cite{VaziraniYannakakis1988}.

 For non-Pfaffian graphs, one can ask whether \(\PM(G)\) can still be
obtained as a linear combination of several Pfaffians.
Following Norine~\cite{Norine2009}, a graph \(G\) is called
\emph{\(k\)-Pfaffian} if there exist orientations
\(D_1,\ldots,D_k\) of \(G\) and real coefficients
\(c_1,\ldots,c_k\) such that
\[
    \PM(G)=\sum_{i=1}^k c_i\Pf(A_{D_i}).
\]
The \emph{Pfaffian number} of \(G\), denoted \(\pf(G)\), is the least positive
integer \(k\) such that \(G\) is \(k\)-Pfaffian. Thus \(\pf(G)=1\) exactly
when \(G\) is Pfaffian.
Norine proved that every \(3\)-Pfaffian graph is Pfaffian and every
\(5\)-Pfaffian graph is \(4\)-Pfaffian~\cite{Norine2009}. Motivated by these
results, Norine conjectured that Pfaffian numbers are powers of \(4\).
Miranda and Lucchesi disproved this conjecture by constructing a graph with
Pfaffian number \(6\)~\cite{MirandaLucchesi2011}. They further conjectured
that every Pfaffian number greater than \(1\) is even, and that every even
integer \(k\ge4\) occurs as the Pfaffian number of some graph. This conjecture remains open. 

The Pfaffian method also extends to graphs on surfaces. The orientable genus of a graph \(G\), denoted \(\gamma(G)\), is the smallest
integer \(g\) such that \(G\) can be embedded without crossings on an
orientable surface with \(g\) handles. Thus planar graphs are precisely
the graphs with orientable genus \(0\). Kasteleyn stated, without proof, that for every graph of orientable genus at
most \(g\), the perfect-matching polynomial can be written as a linear
combination of \(4^g\) Pfaffians. 
This statement was later proved independently by
Galluccio and Loebl~\cite{GalluccioLoebl1999} and by
Tesler~\cite{Tesler2000}. Tesler also extended the result to non-orientable
surfaces.
Thus orientable genus gives a general upper bound on the number of Pfaffians
needed to represent \(\PM(G)\), that is,
\[
    \pf(G)\le 4^g \text{ whenever }\gamma(G)\le g.
\]

A \(k\)-Pfaffian representation is an algebraic representation of the
perfect-matching polynomial.  Once the orientations and coefficients are
available, it allows \(\PM(G)\) to be evaluated using \(k\) Pfaffian
computations.
For graphs of orientable genus \(g\), the theorem of Galluccio--Loebl and
Tesler gives an explicit representation using \(4^g\) Pfaffians. A natural
question is whether an exponential number of Pfaffian terms is necessary.

A related but different question was studied by Curticapean and Marx
\cite{CurticapeanMarx2016}. They showed that, assuming \(\#\)ETH, counting
perfect matchings on graphs of genus \(g\) admits no
$$
    2^{o(g)}n^{O(1)}
$$
time algorithm. This does not imply an exponential lower bound on the
Pfaffian number, since a short Pfaffian representation could in principle
exist without being efficiently constructible. Here we ask whether
exponentially many Pfaffian terms are necessary even at the level of
existence.
More recently, Junchaya, Miranda, and Lucchesi initiated the study of lower
bounds for the Pfaffian number. In particular, they proved
\[
    \pf(K_{3r,3r})\ge 3r+1,
\]
and hence showed that Pfaffian numbers are unbounded
\cite{JunchayaMirandaLucchesi2026}.

In this paper we show that the Pfaffian number can grow
exponentially with the orientable genus. Let \(\pfmax(g)\) denote the maximum Pfaffian number
among all graphs of orientable genus at most \(g\), that is,
\[
    \pfmax(g):=\max\{\pf(G):\gamma(G)\le g\}.
\]

\begin{theorem}
\label{thm:intro-genus-main}
For every \(g\ge1\),
\[
    \pfmax(g)\ge \left(\frac83\right)^g.
\]
\end{theorem}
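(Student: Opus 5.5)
The plan is to take as witness the disjoint union \(G_g\) of \(g\) copies of \(K_{3,3}\), or, if connectivity is wanted, a connected sum that keeps the relevant structure. It has orientable genus at most \(g\). I would then show \(\pf(G_g)\ge (8/3)^g\) by translating the problem into one about signed permanent/determinant expansions, as the abstract suggests. Step one is the standard reduction. For a bipartite graph with biadjacency pattern \(A\), every orientation \(D\) induces a signing \(B_D\) of the entries of the variable matrix \(X_A\) with
\[
\Pf(A_D)=\pm\det(B_D),
\]
where the global sign depends only on the vertex ordering. Hence a \(k\)-Pfaffian representation of \(\PM(G_g)\) yields a representation
\[
\per(X)=\sum_{i=1}^k c_i\det(X^{(s_i)}),
\]
where \(X\) is the \(3g\times 3g\) block-diagonal matrix of distinct variables with \(g\) blocks of size \(3\times3\), and \(X^{(s_i)}\) is obtained from \(X\) by changing the signs of some entries.

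Step two is to linearize. Monomials of \(\per(X)\) are indexed by \(S_3^g\). A signing \(s\) gives the coefficient vector
\[
v_s(\sigma)=\sign(\sigma)\prod_{j}s_{j,\sigma(j)}\in\{\pm1\}^{S_3^g}.
\]
Because \(X\) is block diagonal, \(s\) splits into block signings \(s^{(1)},\dots,s^{(g)}\), and \(v_s=v_{s^{(1)}}\otimes\cdots\otimes v_{s^{(g)}}\). So the claim becomes a tensor statement. Let \(V\subset\{\pm1\}^{S_3}\) be the set of vectors coming from \(3\times3\) signings. A short parity computation shows that \(V\) consists of all \(\pm1\) vectors whose product of coordinates is \(-1\), using \(\prod_\sigma\sign(\sigma)=-1\) and the fact that each entry appears in exactly two permutations. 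We must show that writing \(\mathbf 1^{\otimes g}\) as a linear combination of products of elements of \(V\) requires at least \((8/3)^g\) terms.

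Step three, which I expect to be the main obstacle, is to find a lower-bound certificate that is multiplicative under tensor products and insensitive to the (unbounded) coefficients \(c_i\). The naive inner-product bound \(\langle \mathbf 1,v\rangle\le 4\) against \(\|\mathbf 1\|^2=6\) fails, since the \(c_i\) can be arbitrary. Flattening rank also fails, since \(\mathbf 1^{\otimes g}\) has rank one.

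What I would try first is a support/coordinate-restriction argument. Choose a suitable weighting or probability distribution on \(S_3\), tensored up to \(S_3^g\). Then consider a quantity \(\mu\) on vectors, such as the maximum weight of a set \(T\) of coordinates on which a vector in the span must be determined, or a determinant/minor of the evaluation matrix restricted to \(T\). Arrange \(\mu\) to satisfy two properties:
\[
\mu(\mathbf 1)/\max_{v\in V}\mu(v)=8/3 \quad\text{on } S_3,
\]
and \(\mu\) is multiplicative on tensors and subadditive on linear combinations in the "span of \(k\) vectors" sense.

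The number \(8/3\) suggests comparing \(16\) against \(6\). Each \(v\in V\) agrees with \(\mathbf 1\) in exactly \(1\), \(3\) or \(5\) coordinates, and a Fourier-type weighting on the six permutations (splitting by sign classes) might give exactly this ratio. I would verify the one-block case by hand or by exhaustive check over the \(32\) sign vectors in \(V\). Multiplicativity would then be handled by a standard tensor-product argument, giving \(k\ge(8/3)^g\) and hence the theorem.
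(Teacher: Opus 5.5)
Your reduction in Steps 1--2 is correct and is the paper's: Lemma~\ref{lem:bipartite-signed-determinants} applied to $K_{3,3}^{\sqcup g}$, whose genus is at most $g$. Your description of $V$ as the $32$ sign vectors with coordinate product $-1$ is also right. But Step 3 is the whole theorem, and you leave it as a wish list. No $\mu$ is exhibited, and the candidates you float do not work:
\begin{itemize}
\item Restricting to sets of coordinates cannot separate $\mathbf 1$ from $V$, because $\mathbf 1$ and every $v\in V$ are nonzero in every coordinate.
\item A single weighting or norm on $\mathbb R^{S_3}$ is defeated by the free coefficients $c_i$. One functional $\phi$ with $\phi(\mathbf 1)\neq0$ only yields $k\ge1$, which is the same failure you noted for $\langle\mathbf 1,v\rangle$.
\item The statistic you single out, the number of coordinates in which $v$ agrees with $\mathbf 1$, is not the relevant one.
\end{itemize}

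The missing idea is to use a whole family of linear functionals that never vanish on $\mathbf 1^{\otimes g}$ but vanish on a fixed fraction of each $v_s$. You then count zero versus nonzero, which is automatically independent of the $c_i$. Concretely, evaluate the identity at sign matrices. For $X\in\{\pm1\}^{3\times3}$ set $w_X(\sigma)=\prod_j X_{j,\sigma(j)}$, so that $\per(X)=\langle\mathbf 1,w_X\rangle$ and $\det(s\circ X)=\langle v_s,w_X\rangle$.
\begin{itemize}
\item The $w_X$ are exactly the $32$ sign vectors with product $+1$, so $\langle\mathbf 1,w\rangle\in\{\pm2,\pm6\}$ is never $0$.
\item For $v\in V$, $\langle v,w\rangle=0$ iff $v\circ w$ has exactly three entries $-1$. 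This happens for $\binom63=20$ of the $32$ vectors $w$.
\item Since $\langle\bigotimes_j v_j,\bigotimes_j w_j\rangle=\prod_j\langle v_j,w_j\rangle$, each term is nonzero at exactly $12^g$ of the $32^g$ product test points.
\item Every test point needs some nonzero term, so double counting gives $32^g\le k\,12^g$, i.e.\ $k\ge(8/3)^g$.
\end{itemize}
This is precisely the paper's argument: Lemmas~\ref{lem:per-nonzero} and~\ref{lem:det-192} with Theorem~\ref{thm:block-amplification}. The paper states it as $512$ versus $192$ sign matrices because each $w$ comes from $16$ matrices $X$. In your language, the right $\mu$ is the density of nonvanishing on this product test set. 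It is multiplicative on tensors, and the nonvanishing set of a linear combination lies in the union of the nonvanishing sets. It gives $\mu(\mathbf 1)/\mu(v)=1/(3/8)=8/3$.
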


 In fact, for every \(g\ge1\), we construct a cubic bipartite matching-covered graph
\(G_g\) of orientable genus exactly \(g\) such that
\[
    \pf(G_g)\ge
    \left\lceil\left(\frac83\right)^g\right\rceil .
\]
Combining our lower bound with the theorem of Galluccio--Loebl and Tesler,
we obtain
\[
    \left(\frac83\right)^g\le \pfmax(g)\le 4^g.
\]
This shows that the \(4^g\) upper bound cannot, in general, be replaced by a subexponential bound in \(g\).  In other words, there are graphs of
orientable genus \(g\) for which every representation of \(\PM(G)\) as a
linear combination of Pfaffians requires exponentially many terms. 
At the heart of the proof is the following matrix-theoretic lower bound.

\begin{theorem}
\label{thm:permanent-signed-det}
Let \(A=(a_{ij})\) be an \(n\times n\) matrix of distinct variables. Suppose
that \(\per(A)\) can be written as a linear combination
\[
    \per(A)=\sum_{i=1}^k c_i\det(A_i),
\]
where each \(c_i\in\mathbb R\), and each \(A_i\) is obtained from \(A\) by
changing signs of entries independently. Then
\[
    k\ge
    \left\lceil
    \left(\frac83\right)^{\lfloor n/3\rfloor}
    \right\rceil .
\]
\end{theorem}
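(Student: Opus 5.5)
We plan to reduce the statement to a combinatorial problem about sign functions on permutations, pass to a block-diagonal substructure of \(3\times3\) blocks so that everything tensorizes, and then bound the number of terms by a tensor-power argument. Throughout, a signing is \(\varepsilon=(\varepsilon_{ij})\in\{\pm1\}^{n\times n}\) with \(A_\varepsilon=(\varepsilon_{ij}a_{ij})\). Since the variables are distinct, the monomials \(\prod_j a_{j,\sigma(j)}\) are linearly independent. Comparing coefficients, \(\per(A)=\sum_i c_i\det(A_{\varepsilon^{(i)}})\) is equivalent to the identity of functions on \(S_n\)
\[
  \mathbf 1=\sum_{i=1}^k c_i\, g_{\varepsilon^{(i)}},\qquad g_\varepsilon(\sigma):=\sign(\sigma)\prod_{j}\varepsilon_{j,\sigma(j)} .
\]
Write \(m=\lfloor n/3\rfloor\). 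We restrict this identity to the subgroup \(H\cong S_3^m\) of permutations that preserve each of the consecutive blocks \(\{3t-2,3t-1,3t\}\) and fix the remaining \(n-3m\) points. On \(H\), the sign is multiplicative over blocks, and the product \(\prod_j\varepsilon_{j,\sigma(j)}\) factors over blocks up to a constant coming from the fixed points. Hence each \(g_\varepsilon|_H\) is a scalar multiple of a pure tensor \(h_1\otimes\cdots\otimes h_m\), where each \(h_t\) is a signed \(3\times3\) determinant pattern on \(S_3\). This reduces the theorem to the following claim: if \(\mathbf 1^{\otimes m}\in\mathbb R^{S_3^{m}}\) is a linear combination of \(k\) pure tensors of signed-determinant vectors \(h\in\mathcal G\subset\{\pm1\}^{S_3}\), then \(k\ge(8/3)^m\).

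For the base case we study the finite set \(\mathcal G\) of \(3\times3\) signed-determinant vectors. A key structural fact is the following. Every permutation of \(S_3\) uses each entry \((j,\ell)\) exactly twice among the six permutations, and three of the six permutations are odd. Therefore \(\prod_{\sigma\in S_3}h(\sigma)=-1\) for every \(h\in\mathcal G\). So every \(h\in\mathcal G\) disagrees with \(\mathbf 1\) in an odd number of positions, which re-proves that \(K_{3,3}\) is not Pfaffian. The goal is to extract from the structure of \(\mathcal G\) a quantitative one-block inequality with ratio \(8/3\). Concretely, we aim for a weight vector, or more generally a linear functional \(\phi\) on \(\mathbb R^{S_3}\), satisfying \(|\phi(h)|\le\frac38\,\phi(\mathbf 1)\) for all \(h\in\mathcal G\). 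Here the \(8\) presumably counts sign patterns and the \(3\) the agreements achievable.

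To handle the arbitrary real coefficients \(c_i\), we will exploit the group \(T\) of functions \(\sigma\mapsto\prod_j\varepsilon_{j\sigma(j)}\) acting by pointwise multiplication. Multiplying the identity \(\mathbf 1=\sum c_i g_i\) by any \(t\in T\) yields \(t=\sum c_i\,(t g_i)\), and \(\mathcal G\) is stable under this action. Averaging the functional \(\phi^{\otimes m}\) over \(T^{\otimes m}\), with suitable characters, should turn the identity into a counting statement that no longer involves the \(c_i\). The resulting statement should be of the form: the \(k\) tensors must jointly account for total mass \(\phi(\mathbf 1)^m\), while each accounts for at most \((\frac38\phi(\mathbf 1))^m\). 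An alternative, coefficient-free route is a rank argument. Here we would show that the span of \(\{t\cdot\mathbf 1^{\otimes m}:t\in T^{\otimes m}\}\), or of a suitable flattening of it, has dimension \(8^m\). Each term \(g_i\), translated by \(T\), would contribute only \(3^m\) dimensions, which gives \(k\ge 8^m/3^m\). The ceiling in the statement then follows from integrality of \(k\).

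The main obstacle is the one-block step together with its tensorization in the presence of arbitrary coefficients. We must find the right invariant on \(\mathbb R^{S_3}\), whether a functional or a rank/flattening quantity. It must be multiplicative under \(\otimes\), must equal \(8\) on the target \(\mathbf 1\) (or its \(T\)-orbit), and must be at most \(3\) on every element of \(\mathcal G\). We would first try the rank formulation, because rank is automatically insensitive to the \(c_i\) and is submultiplicative/multiplicative for tensor products. The initial task would be to compute, by direct enumeration over the \(2^9\) signings of a \(3\times3\) matrix, the dimensions of the relevant orbit spans to confirm the numbers \(8\) and \(3\).
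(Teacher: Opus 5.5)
\textbf{There is a genuine gap.} Your reduction is sound: comparing coefficients, restricting to $H\cong S_3^m$, and factoring each $g_\varepsilon|_H$ as $\pm$ a pure tensor is exactly the paper's specialization $A\mapsto B\oplus I_{n-3m}$. But the proposal stops where the proof has to happen, and neither mechanism you suggest can work.

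\emph{The functional route.} Your parity observation shows that $T$ is the set of all $32$ sign vectors on $S_3$ with an even number of $-1$'s. Hence $\mathcal G=\sign\cdot T$ is the set of all $32$ sign vectors with an odd number of $-1$'s. In particular $\mathbf 1-2e_\sigma\in\mathcal G$ for every $\sigma$. Averaging $\phi(\mathbf 1-2e_\sigma)=\phi(\mathbf 1)-2\phi(e_\sigma)$ over the six $\sigma$ gives $\max_{h\in\mathcal G}|\phi(h)|\ge\frac23\phi(\mathbf 1)$ for every nonzero $\phi$. So no functional achieves the ratio $\frac38$. In any case, a single functional only bounds $\sum_i|c_i|$, not $k$.

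\emph{The rank route.} The vectors $t\cdot\mathbf 1^{\otimes m}$ live in $\mathbb R^{6^m}$, so their span cannot have dimension $8^m$. Worse, in one block both $T\cdot\mathbf 1=T$ and $T\cdot g=\mathcal G$ span all of $\mathbb R^6$, so orbit spans separate nothing.

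\textbf{The missing idea: count non-vanishing instead of bounding values or dimensions.} Pair the identity $\mathbf 1^{\otimes m}=\sum_i c_i g_i$ with every $t=t_1\otimes\cdots\otimes t_m\in T^{\otimes m}$. This is precisely the paper's evaluation at block-diagonal sign matrices $X$, since $\per(X)=\langle\mathbf 1,t_X\rangle$ and $\det(S\circ X)=\langle g_S,t_X\rangle$.

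In one block:
\begin{itemize}
\item $\langle\mathbf 1,t_j\rangle$ is a sum of six $\pm1$'s with an even number of minus signs, so it is never $0$.
\item $\langle g,t_j\rangle$ is a sum of six $\pm1$'s with an odd number of minus signs, so it is $0$ exactly when there are three. As $t_j$ runs over $T$, $gt_j$ runs over all odd-parity vectors, and $\binom63=20$ of them have three minus signs. So $\langle g,t_j\rangle\neq0$ for exactly $12$ of the $32$ choices.
\end{itemize}

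Both quantities factor over blocks. So the left side is nonzero at all $32^m$ points, while each term is nonzero at exactly $12^m$ of them. A nonzero left side forces some nonzero term, so
\[
32^m\le k\cdot12^m,\qquad\text{i.e.}\qquad k\ge(8/3)^m.
\]
The paper's count $512^r\le k\cdot192^r$ is the same, with each $t$ hit by $16$ sign matrices.

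This support count is the coefficient-free, tensor-multiplicative invariant you were looking for. In your rank language, it is the rank of the diagonal matrix $\mathrm{diag}(\langle v,t\rangle)_{t\in T}$, not of orbit spans. Note also that $8/3$ arises as $32/12$, not as ``sign patterns over agreements.''
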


Thus, as a polynomial identity, writing the permanent as a linear combination
of determinants obtained by changing signs of entries requires exponentially
many determinants. This lower bound applies to this signed-determinant
representation model, rather than to general determinantal representations of
the permanent, and may be of independent interest.

We now apply this matrix-theoretic lower bound to Pfaffian numbers of graphs.
It gives a structural consequence related to
Little's \(K_{3,3}\) obstruction. If a graph \(G\) contains a conformal
subgraph that is a disjoint union of \(r\) bisubdivisions of \(K_{3,3}\),
then
\[
    \pf(G)\ge
    \left\lceil\left(\frac83\right)^r\right\rceil .
\]
Thus a conformal subgraph consisting of \(r\) disjoint \(K_{3,3}\)
obstructions already forces the Pfaffian number to grow exponentially.
For complete bipartite graphs, the matrix lower bound gives the following
exponential bound.

\begin{theorem}
\label{thm:intro-complete-bipartite}
For every \(n\ge3\),
\[
    \pf(K_{n,n})
    \ge
    \left\lceil
    \left(\frac83\right)^{\lfloor n/3\rfloor}
    \right\rceil .
\]
\end{theorem}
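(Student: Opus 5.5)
We will derive this from Theorem~\ref{thm:permanent-signed-det} by translating a Pfaffian representation of \(\PM(K_{n,n})\) into a signed-determinant representation of \(\per(A)\). Label the colour classes of \(K_{n,n}\) as \(\{1,\ldots,n\}\) and \(\{n+1,\ldots,2n\}\). Order the vertices as \(1,\ldots,2n\), and write \(a_{ij}:=x_{i,n+j}\) for the edge variables. Perfect matchings of \(K_{n,n}\) correspond bijectively to permutations \(\sigma\in S_n\) via \(M_\sigma=\{\{i,n+\sigma(i)\}\}\). The corresponding monomial is \(\prod_i a_{i\sigma(i)}\). Hence \(\PM(K_{n,n})=\per(A)\) for the \(n\times n\) matrix \(A=(a_{ij})\) of distinct variables.

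Next, fix an orientation \(D\) of \(K_{n,n}\). Define the signing \(\epsilon_D(i,j)=+1\) if the edge \(\{i,n+j\}\) is oriented from \(i\) to \(n+j\), and \(-1\) otherwise. Let \(A_D'\) be the matrix with entries \(\epsilon_D(i,j)a_{ij}\). The weighted skew-adjacency matrix then has block form
\[
A_D=\begin{pmatrix}0 & A_D'\\ -(A_D')^{T} & 0\end{pmatrix}.
\]
The standard identity gives \(\Pf(A_D)=(-1)^{n(n-1)/2}\det(A_D')\). I would verify this directly from the definition of \(\sign_D(M)\). List each edge of \(M_\sigma\) as the pair \((i,n+\sigma(i))\), which introduces the factor \(\epsilon_D(i,\sigma(i))\) when the tail is \(n+\sigma(i)\). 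The permutation \(1,n+\sigma(1),2,n+\sigma(2),\ldots\) has sign \((-1)^{n(n-1)/2}\sign(\sigma)\). The global sign depends only on \(n\), not on \(D\) or \(\sigma\).

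Now suppose \(\PM(K_{n,n})=\sum_{i=1}^k c_i\Pf(A_{D_i})\). Then
\[
\per(A)=\sum_{i=1}^k c_i(-1)^{n(n-1)/2}\det(A'_{D_i}).
\]
Each \(A'_{D_i}\) is obtained from \(A\) by changing signs of entries independently, and the coefficients remain real. Theorem~\ref{thm:permanent-signed-det} therefore yields \(k\ge\lceil(8/3)^{\lfloor n/3\rfloor}\rceil\). Taking \(k=\pf(K_{n,n})\) gives the claim. The requirement \(n\ge3\) only ensures that the bound is nontrivial; the argument works for all \(n\).

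The only step needing care is the sign bookkeeping in the Pfaffian-to-determinant identity. Its global sign must be independent of the orientation, since otherwise it could not be absorbed into the coefficients. Because the sign is uniform, this is routine.
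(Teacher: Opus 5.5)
Your proposal is correct and follows the paper's proof essentially verbatim. The paper likewise uses the block form (Lemmas~\ref{lem:bipartite-pf-det} and~\ref{lem:bipartite-signed-determinants}) to write each Pfaffian as $(-1)^{n(n-1)/2}\det(S_D\circ B_G)$, absorbs this orientation-independent sign into the coefficients, and applies Theorem~\ref{thm:permanent-signed-det}; your sign bookkeeping (the factor $\epsilon_D(i,\sigma(i))$ from a reversed tail/head, and the sign $(-1)^{n(n-1)/2}\sign(\sigma)$ of the interleaved list) matches the paper's computation.
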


For \(n=3r\), this gives an exponential lower bound in \(r\), improving asymptotically on the previously known linear lower bound
\[
    \pf(K_{3r,3r})\ge 3r+1
\]
of Junchaya, Miranda, and Lucchesi. 
They also obtained
\[
    \pf(K_{2n})=\Omega(n\log n)
\]
and suggested that the Pfaffian number might
grow exponentially for complete graphs~\cite{JunchayaMirandaLucchesi2026}. 
Since \(K_{2n}\) contains a spanning copy of \(K_{n,n}\), the same lower
bound also holds for even complete graphs.

\begin{theorem}
\label{thm:intro-complete-graphs}
For every \(n\ge3\),
\[
    \pf(K_{2n})
    \ge
    \left\lceil
    \left(\frac83\right)^{\lfloor n/3\rfloor}
    \right\rceil .
\]
\end{theorem}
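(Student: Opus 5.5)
The plan is to deduce Theorem~\ref{thm:intro-complete-graphs} from Theorem~\ref{thm:intro-complete-bipartite}. We show that the Pfaffian number does not increase when passing to a spanning subgraph obtained by deleting edges. The relevant notion is a \emph{conformal} spanning subgraph, which is automatic when the subgraph is spanning. Concretely, fix a spanning copy \(H\cong K_{n,n}\) inside \(K_{2n}\). To obtain it, split the vertex set into two halves \(X\) and \(Y\) of size \(n\) and keep only the edges between \(X\) and \(Y\).

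Suppose \(\pf(K_{2n})=k\). Then there are orientations \(D_1,\dots,D_k\) of \(K_{2n}\) and reals \(c_1,\dots,c_k\) such that
\[
\PM(K_{2n})=\sum_{i=1}^k c_i\Pf(A_{D_i}).
\]
Both sides are polynomials in the edge variables \(x_e\). Substitute \(x_e=0\) for every edge \(e\notin E(H)\).

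\textbf{Left-hand side.} Each monomial of \(\PM(K_{2n})\) is the product over a perfect matching. It survives the substitution exactly when that matching uses only edges of \(H\). So the left side becomes \(\PM(H)\).

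\textbf{Right-hand side.} By the definition of the weighted skew-adjacency matrix, \(\Pf(A_{D_i})\) is a signed sum over perfect matchings of \(K_{2n}\). The sign of a matching \(M\) is \(\sign_{D_i}(M)\), and it depends only on the orientations of the edges in \(M\). After substitution, only matchings of \(H\) remain. Each carries the sign \(\sign_{D_i|_H}(M)\), where \(D_i|_H\) is the restricted orientation. Hence each term becomes \(\Pf(A_{D_i|_H})\), the Pfaffian of \(H\) with the restricted orientation.

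This yields
\[
\PM(H)=\sum_i c_i\Pf(A_{D_i|_H}),
\]
so \(H\) is \(k\)-Pfaffian and \(\pf(H)\le k\). Note that the vertex ordering of \(K_{2n}\) is fixed throughout, so the same permutation signs apply before and after the substitution.

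Combining this with Theorem~\ref{thm:intro-complete-bipartite} gives
\[
\pf(K_{2n})\ge\pf(K_{n,n})\ge\lceil(8/3)^{\lfloor n/3\rfloor}\rceil.
\]

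The only point needing care is the claim that specializing variables commutes with taking the Pfaffian expansion. Equivalently, we need that the substitution sends the skew-adjacency matrix of \((K_{2n},D_i)\) to that of \((H,D_i|_H)\). This holds entrywise, since the entries for deleted edges become \(0\) while all other entries are unchanged. It is therefore routine, and I do not expect a real obstacle.
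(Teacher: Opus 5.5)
Your proposal is correct and follows the paper's route: the paper also passes to the spanning \(K_{n,n}\) via Proposition~\ref{prop:conformal}, whose proof is exactly your substitution argument (with the auxiliary matching \(N\) empty in the spanning case), and then applies Theorem~\ref{thm:intro-complete-bipartite}. One small imprecision is that a spanning subgraph is conformal only when it has a perfect matching, not automatically; \(K_{n,n}\) has one, and your direct substitution argument does not actually need conformality.
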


The rest of the paper is organized as follows.
Section~\ref{sec:prelim} records the preliminary reductions used in the proofs.
Section~\ref{sec:lower-bound} proves the matrix lower bound for representing
the permanent as a linear combination of signed determinants.
Section~\ref{sec:consequences} first derives the main genus lower bound
from disjoint copies of \(K_{3,3}\), then strengthens it to cubic
bipartite matching-covered graphs of the same genus, and finally derives
the bounds for complete bipartite and complete graphs.

\section{Preliminaries}\label{sec:prelim}

In this section, we explain how, for bipartite
graphs, Pfaffians arising from orientations can be written as signed
determinants of weighted biadjacency matrices. We also record a simple
monotonicity property of the
Pfaffian number under conformal subgraphs. We write \(\sqcup\) for disjoint union, both for sets and
for graphs. We write \(\circ\) for entrywise product of matrices of the
same size.

Let \(G=(U\sqcup V,E)\) be a bipartite graph on \(2n\) vertices, with
\(|U|=|V|=n\). The \emph{weighted biadjacency matrix} of \(G\), denoted
by \(B_G\), is the matrix with rows indexed by \(U\), columns indexed by
\(V\), and entries
\[
    (B_G)_{uv}
    =
    \begin{cases}
        x_{uv}, & \text{if }uv\in E(G),\\
        0, & \text{otherwise}.
    \end{cases}
\]
Then \(\PM(G)=\per(B_G)\), since choosing one nonzero entry from each row
and each column of \(B_G\) is the same as choosing a perfect matching of
\(G\).

\begin{lemma}
\label{lem:bipartite-pf-det}
Let \(G=(U\sqcup V,E)\) be a bipartite graph with \(|U|=|V|=n\). Order the vertices with \(U\) first and \(V\) second.
For every orientation \(D\) of \(G\), there is a sign matrix
\(S_D\in\{\pm1\}^{n\times n}\) such that
\[
    \Pf(A_D)
    =
    (-1)^{n(n-1)/2}\det(S_D\circ B_G).
\]
\end{lemma}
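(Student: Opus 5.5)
We compare the two expansions term by term. With \(U=\{1,\dots,n\}\) ordered first and \(V=\{n+1,\dots,2n\}\) second, every perfect matching \(M\) of \(G\) corresponds to a unique permutation \(\sigma\in S_n\), namely \(M=\{\{i,n+\sigma(i)\}:1\le i\le n\}\). Its monomial \(\prod_{e\in M}x_e\) equals \(\prod_i (B_G)_{i\sigma(i)}\).

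\textbf{Step 1: the sign matrix.} Set \((S_D)_{uv}=+1\) if the edge \(uv\) is oriented from \(u\in U\) to \(v\in V\) in \(D\), set it to \(-1\) if it is oriented from \(v\) to \(u\), and set it to \(+1\) arbitrarily when \(uv\notin E(G)\). The last choice is harmless, since those entries are multiplied by zero in \(S_D\circ B_G\).

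\textbf{Step 2: expand \(\Pf(A_D)\).} Using the independence of \(\sign_D(M)\) from the enumeration order and from the tail/head convention, we list the edges of \(M\) as \((i,n+\sigma(i))\) for \(i=1,\dots,n\), always writing the \(U\)-endpoint first. Each edge oriented from \(V\) to \(U\) contributes one transposition relative to \(\pi_D(M)\). Therefore
\[
\sign_D(M)=\sign(\tau_\sigma)\prod_{i=1}^n (S_D)_{i\sigma(i)},
\]
where \(\tau_\sigma\) is the permutation \(1,2,\dots,2n\mapsto 1,n+\sigma(1),2,n+\sigma(2),\dots,n,n+\sigma(n)\). Equivalently, \((A_D)_{i,n+\sigma(i)}=(S_D)_{i\sigma(i)}x_{i,n+\sigma(i)}\), so the sign is absorbed into the weight once we use the convention of the skew-symmetric-matrix Pfaffian.

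\textbf{Step 3: compute \(\sign(\tau_\sigma)\).} This is the main obstacle. Factor \(\tau_\sigma\) as the interleaving permutation \(\tau_{\mathrm{id}}\) followed by the permutation \(\sigma\) acting on the \(V\)-labels. Hence \(\sign(\tau_\sigma)=\sign(\tau_{\mathrm{id}})\sign(\sigma)\). The word \(1,n+1,2,n+2,\dots,n,n+n\) has inversions only between \(n+j\) and a later \(i\) with \(i>j\). The number of such pairs is \(\sum_{j}(n-j)=n(n-1)/2\), so \(\sign(\tau_{\mathrm{id}})=(-1)^{n(n-1)/2}\).

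\textbf{Step 4: conclusion.} Combining the previous steps, \(\Pf(A_D)=(-1)^{n(n-1)/2}\sum_{\sigma}\sign(\sigma)\prod_i (S_D)_{i\sigma(i)}(B_G)_{i\sigma(i)}\). The right-hand side is \((-1)^{n(n-1)/2}\det(S_D\circ B_G)\). Permutations \(\sigma\) that do not correspond to matchings contribute zero on both sides, so they do not affect the identity.
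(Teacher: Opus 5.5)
Your proof is correct and follows essentially the same route as the paper. You use the same choice of $S_D$, index the nonzero terms by $\sigma\in S_n$, and split the sign into $(-1)^{n(n-1)/2}$ from de-interleaving times $\sign(\sigma)$; your inversion count $\sum_j (n-j)$ is exactly the paper's count of moving the $V$-vertices to the right. The only cosmetic difference is that you track the orientation sign as one transposition per $V$-to-$U$ edge, whereas the paper absorbs it into the entries of the block form of $A_D$.
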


\begin{proof}
Write
\(
    U=\{u_1,\ldots,u_n\},
\) and \(
    V=\{v_1,\ldots,v_n\},
\)
and order the vertices as
\(
    u_1,\ldots,u_n,v_1,\ldots,v_n.
\)
With this vertex order, the weighted skew-adjacency matrix of \(D\) has
block form
\[
    A_D=
    \begin{pmatrix}
        0 & S_D\circ B_G\\
        -(S_D\circ B_G)^T & 0
    \end{pmatrix}
\]
for some sign matrix \(S_D\in\{\pm1\}^{n\times n}\). Define \(S_D=(s_{ij})\in\{\pm1\}^{n\times n}\) by
\[
    s_{ij}
    =
    \begin{cases}
        1, & \text{if } u_iv_j\in E(G)\text{ is oriented from }u_i\text{ to }v_j,\\
        -1, & \text{if } u_iv_j\in E(G)\text{ is oriented from }v_j\text{ to }u_i,\\
        1, & \text{if } u_iv_j\notin E(G).
    \end{cases}
\]
The value of \(s_{ij}\) on non-edges is irrelevant, because then
\((B_G)_{ij}=0\).
Set
\(
    C:=S_D\circ B_G.
\)
We show that
\[
    \Pf
    \begin{pmatrix}
        0 & C\\
        -C^T & 0
    \end{pmatrix}
    =
    (-1)^{n(n-1)/2}\det(C).
\]
In the Pfaffian expansion, a nonzero term cannot pair two vertices both in
\(U\), nor two vertices both in \(V\), because the two diagonal blocks are
zero.  Hence every nonzero term pairs each \(u_i\) with exactly one vertex
\(v_{\sigma(i)}\), where \(\sigma\in S_n\).  Thus the nonzero terms are
indexed by permutations \(\sigma\in S_n\).
For a fixed \(\sigma\), let
\(
    M_\sigma=\{u_1v_{\sigma(1)},\ldots,u_nv_{\sigma(n)}\}.
\)
The corresponding product of matrix entries is
\[
    \prod_{i=1}^n C_{i,\sigma(i)}.
\]
It remains to compute the sign of this term in the Pfaffian expansion. With our vertex
order, the pairing \(M_\sigma\) gives the ordered list
\[
    u_1,v_{\sigma(1)},u_2,v_{\sigma(2)},\ldots,u_n,v_{\sigma(n)}.
\]
Move all \(V\)-vertices to the right, keeping their relative order.  This
requires
\[
    (n-1)+(n-2)+\cdots+1+0=\frac{n(n-1)}2
\]
transpositions, and gives
\[
    u_1,u_2,\ldots,u_n,
    v_{\sigma(1)},v_{\sigma(2)},\ldots,v_{\sigma(n)}.
\]
The remaining order
\(
    v_{\sigma(1)},v_{\sigma(2)},\ldots,v_{\sigma(n)}
\)
has sign \(\sign(\sigma)\) relative to
\(
    v_1,v_2,\ldots,v_n.
\)
Therefore the Pfaffian sign of the term indexed by \(\sigma\) is
\(
    (-1)^{n(n-1)/2}\sign(\sigma).
\)
Hence
\[
    \Pf(A_D)
    =
    \sum_{\sigma\in S_n}
    (-1)^{n(n-1)/2}\sign(\sigma)
    \prod_{i=1}^n C_{i,\sigma(i)}.
\]
Factoring out the constant sign gives
\[
    \Pf(A_D)
    =
    (-1)^{n(n-1)/2}
    \sum_{\sigma\in S_n}
    \sign(\sigma)\prod_{i=1}^n C_{i,\sigma(i)}.
\]
The last sum is exactly \(\det(C)\).  Since \(C=S_D\circ B_G\), we obtain
\[
    \Pf(A_D)
    =
    (-1)^{n(n-1)/2}\det(S_D\circ B_G).
\]
This proves the lemma.
\end{proof}

\begin{lemma}
\label{lem:bipartite-signed-determinants}
Let \(G=(U\sqcup V,E)\) be a bipartite graph with \(|U|=|V|=n\).
If \(\pf(G)=k\), then there exist sign matrices
\(S_1,\ldots,S_k\in\{\pm1\}^{n\times n}\) and real coefficients
\(c_1,\ldots,c_k\) such that
\[
    \per(B_G)
    =
    \sum_{i=1}^k c_i\det(S_i\circ B_G).
\]
\end{lemma}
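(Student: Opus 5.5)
The plan is to unwind the definition of \(\pf(G)=k\) and then convert each Pfaffian into a signed determinant using Lemma~\ref{lem:bipartite-pf-det}. Since \(\pf(G)=k\), there are orientations \(D_1,\ldots,D_k\) of \(G\) and real coefficients \(c_1',\ldots,c_k'\) with
\[
    \PM(G)=\sum_{i=1}^k c_i'\Pf(A_{D_i}).
\]
We order the vertices with \(U\) first and \(V\) second, as that lemma requires.

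For each \(i\), Lemma~\ref{lem:bipartite-pf-det} gives a sign matrix \(S_i:=S_{D_i}\in\{\pm1\}^{n\times n}\) with
\[
    \Pf(A_{D_i})=(-1)^{n(n-1)/2}\det(S_i\circ B_G).
\]
We then set \(c_i:=(-1)^{n(n-1)/2}c_i'\), which is still real.

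It remains to identify the left-hand side. As noted just before Lemma~\ref{lem:bipartite-pf-det}, \(\PM(G)=\per(B_G)\): the nonzero terms of the permanent correspond bijectively to perfect matchings, and each contributes the same monomial. Substituting both identities gives
\[
    \per(B_G)=\sum_{i=1}^k c_i\det(S_i\circ B_G).
\]

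There is no real obstacle here. The only point to check is that the Pfaffian number is insensitive to the choice of vertex ordering. Reordering the vertices multiplies every Pfaffian by the same global sign, the sign of the reordering permutation. That sign can be absorbed into the coefficients, so we may use the \(U\)-then-\(V\) ordering without loss of generality.
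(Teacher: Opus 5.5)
Your proposal is correct and matches the paper's proof: unwind the definition of \(\pf(G)=k\), convert each \(\Pf(A_{D_i})\) into \((-1)^{n(n-1)/2}\det(S_i\circ B_G)\) via Lemma~\ref{lem:bipartite-pf-det}, absorb that sign into the coefficients, and use \(\PM(G)=\per(B_G)\). Your extra remark on vertex orderings is also correct: reordering multiplies every Pfaffian by the same global sign, which can be absorbed into the coefficients. The paper leaves this point implicit.
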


\begin{proof}
By definition of $\pf(G)$, there are orientations
\(D_1,\ldots,D_k\) of \(G\) and real coefficients
\(d_1,\ldots,d_k\) such that
\[
    \PM(G)
    =
    \sum_{i=1}^k d_i\Pf(A_{D_i}).
\]
By Lemma~\ref{lem:bipartite-pf-det}, for each \(i\) there is a sign
matrix \(S_i\in\{\pm1\}^{n\times n}\) such that
\[
    \Pf(A_{D_i})
    =
    (-1)^{n(n-1)/2}\det(S_i\circ B_G).
\]
Therefore
\[
    \PM(G)
    =
    \sum_{i=1}^k
    d_i(-1)^{n(n-1)/2}\det(S_i\circ B_G).
\]
Since \(G\) is bipartite,
\[
    \PM(G)=\per(B_G).
\]
Setting
\(
    c_i:=d_i(-1)^{n(n-1)/2}
\)
gives
\[
    \per(B_G)
    =
    \sum_{i=1}^k c_i\det(S_i\circ B_G). \qedhere
\]
\end{proof}

In a graph, edges that do not belong to any perfect matching do not occur in
the perfect-matching polynomial and may therefore be deleted without changing
\(\PM(G)\).
After deleting all such edges, every nontrivial connected
component is matching-covered. Recall that a graph is
\emph{matching-covered} if it is connected and every edge belongs to a
perfect matching. The perfect-matching polynomial then factors over these
components. Thus matching-covered graphs are the natural indecomposable
objects from the viewpoint of perfect matchings. We nevertheless allow general, and in particular disconnected, graphs
throughout the paper, since they will be useful as intermediate constructions
for proving lower bounds that can then be transferred to more structured
graphs.

A subgraph \(H\) of \(G\) is called \emph{conformal} if both \(H\) and
\(G-V(H)\) have perfect matchings, where the empty graph is regarded as
having a perfect matching. We will use the following monotonicity property,
which also follows from results in
\cite{JunchayaMirandaLucchesi2026}; we include a short direct proof.

\begin{proposition}
\label{prop:conformal}
If \(H\) is a conformal subgraph of \(G\), then
\[
    \pf(G)\ge \pf(H).
\]
\end{proposition}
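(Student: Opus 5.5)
The plan is to take an optimal representation for \(G\) and specialize variables so that it collapses to a representation for \(H\) with no more terms. Let \(k=\pf(G)\), and fix orientations \(D_1,\ldots,D_k\) of \(G\) and reals \(c_1,\ldots,c_k\) with \(\PM(G)=\sum_i c_i\Pf(A_{D_i})\). Since \(H\) is conformal, the graph \(G-V(H)\) has a perfect matching \(N\). Fix one such \(N\).

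Next, specialize the edge variables of \(G\) as follows. Keep \(x_e\) for \(e\in E(H)\). Set \(x_e=1\) for \(e\in N\). Set \(x_e=0\) for every other edge, including edges of \(G[V(H)]\) not in \(H\), edges between \(V(H)\) and the rest, and edges of \(G-V(H)\) outside \(N\). After this substitution, a perfect matching \(M\) of \(G\) contributes a nonzero monomial only if \(M\subseteq E(H)\cup N\). Because every edge of \(E(H)\cup N\) lies inside \(V(H)\) or inside \(V(G)\setminus V(H)\), such an \(M\) has the form \(M'\cup N\) with \(M'\in\cM(H)\), and conversely. Hence \(\PM(G)\) specializes to \(\PM(H)\).

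The same substitution applied to \(\Pf(A_{D_i})\) gives \(\sum_{M'\in\cM(H)}\sign_{D_i}(M'\cup N)\prod_{e\in M'}x_e\). The key step is to show this equals \(\pm\Pf(A_{D_i|H})\), where \(D_i|H\) is the restriction of \(D_i\) to \(H\), with a sign \(\epsilon_i\) independent of \(M'\). To see this, order the vertices of \(V(H)\) first, in the order inherited from \(G\), and \(V(G)\setminus V(H)\) afterwards. Reordering the vertices multiplies every term of a Pfaffian by the same sign, namely the sign of the reordering permutation. With this ordering, the permutation \(\pi_{D_i}(M'\cup N)\), with the edges of \(M'\) listed first, is the concatenation of \(\pi_{D_i|H}(M')\) on the first block and a fixed permutation coming from \(N\) on the second block. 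Its sign is therefore \(\sign_{D_i|H}(M')\cdot\sign_{D_i}(N)\), up to the global reordering sign.

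Combining these gives \(\PM(H)=\sum_i c_i\epsilon_i\Pf(A_{D_i|H})\), a representation with at most \(k\) Pfaffians, so \(\pf(H)\le k\). The orientations \(D_i|H\) are orientations of \(H\), and if several coincide we can merge their terms, which only decreases the count. The only delicate point is this sign bookkeeping, especially checking that the block-structured ordering really factors the sign. Alternatively, one can cite the standard fact that \(\sign_D(M_1)\sign_D(M_2)\) depends only on the \(D\)-parity of the alternating cycles of \(M_1\triangle M_2\), which is independent of the vertex ordering. Since any two matchings \(M'_1\cup N\) and \(M'_2\cup N\) have symmetric difference inside \(H\), their relative signs in \(D_i\) and in \(D_i|H\) agree, and this gives the factorization directly.
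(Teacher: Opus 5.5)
Your proposal is correct and follows essentially the same route as the paper: you set \(x_e=1\) on a perfect matching \(N\) of \(G-V(H)\) and \(x_e=0\) off \(E(H)\cup N\), and then observe that each \(\Pf(A_{D_i})\) becomes \(\pm\Pf(A_{D_i|_H})\) with a sign independent of the matching. Your explicit sign bookkeeping under an \(H\)-first vertex ordering is the same as the block-diagonal factorization \(\Pf(A_{D_i})=\Pf(A_{D_i|_H})\Pf(A_{D_i|_N})\) that the paper uses, and you are slightly more explicit than the paper in noting that the reordering costs only a global sign, which can be absorbed into the coefficients.
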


\begin{proof}
Choose a vertex ordering so that the vertices of \(H\)
come first. Let \(\pf(G)=k\). Then there exist orientations \(D_1,\ldots,D_k\) of \(G\) and
real coefficients \(c_1,\ldots,c_k\) such that
\[
    \PM(G)=\sum_{i=1}^k c_i\Pf(A_{D_i}).
\]
Since \(H\) is conformal, \(G-V(H)\) has a perfect matching \(N\).
Set \(x_e=0\) for every
\(e\in E(G)\setminus\bigl(E(H)\cup N\bigr)\), and set \(x_e=1\) for
every \(e\in N\). After this substitution, every surviving perfect matching
of \(G\) must contain \(N\), and hence is of the form
\(M\cup N\), where \(M\in\mathcal M(H)\). Since the edge variables of
\(N\) have been set to \(1\), the left-hand side becomes
\(
    \PM(H).
\)

Let \(D_i|_H\) and \(D_i|_N\) denote the restrictions of \(D_i\) to
\(H\) and \(N\), respectively. After the same substitution, the weighted
skew-adjacency matrix becomes block diagonal, with blocks
\(A_{D_i|_H}\) and \(A_{D_i|_N}\). Hence
\[
    \Pf(A_{D_i})
    =
    \Pf(A_{D_i|_H})\Pf(A_{D_i|_N}).
\]
Here \(\Pf(A_{D_i|_N})\) is a constant, since all edge variables of
\(N\) have been set to \(1\).
Therefore
\[
    \PM(H)
    =
    \sum_{i=1}^k
    c_i\Pf(A_{D_i|_N})\Pf(A_{D_i|_H}).
\]
Absorbing \(\Pf(A_{D_i|_N})\) into the coefficient \(c_i\), we obtain a
Pfaffian representation of \(\PM(H)\) using at most \(k\) terms. Therefore
\[
    \pf(H)\le k=\pf(G). \qedhere
\]
\end{proof}

A subgraph \(H\) of \(G\) is called a \emph{spanning subgraph} if it has the
same vertex set as \(G\).
In particular, every spanning subgraph that has a perfect matching is
conformal, so Proposition~\ref{prop:conformal} applies to such spanning
subgraphs as well.

\section{A Matrix-Theoretic Lower Bound}\label{sec:lower-bound}

This section proves the matrix lower bound. We first record two elementary
facts about \(3\times3\) sign matrices, and then apply them to block
diagonal matrices.

\subsection{Permanent and determinant counts}

Let
\(
    \Sigma_3:=\{\pm1\}^{3\times3}
\)
be the set of all \(3\times3\) sign matrices. Thus
\(
    |\Sigma_3|=2^9=512.
\)
The two facts we need are that the permanent is nonzero
on every element of \(\Sigma_3\), while, for every fixed sign matrix \(S\),
the determinant \(\det(S\circ X)\) is nonzero for exactly \(192\) choices of
\(X\in\Sigma_3\).

\begin{lemma}
\label{lem:per-nonzero}
For every \(X\in\Sigma_3\),
\(
    \per(X)\neq0.
\)
\end{lemma}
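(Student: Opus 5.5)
The plan is to show that \(\per(X)\) is an odd integer, so it cannot be zero. For \(X\in\Sigma_3\), the permanent is a sum of \(3!=6\) terms \(\prod_{i=1}^3 x_{i,\sigma(i)}\), and each term is a product of three entries from \(\{\pm1\}\), hence equals \(\pm1\). A sum of six terms each equal to \(\pm1\) is congruent to \(6\equiv 0 \pmod 2\), so parity alone does not suffice. A finer argument is needed.

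The refined argument works modulo \(4\). Write each entry as \(x_{ij}=(-1)^{e_{ij}}\) with \(e_{ij}\in\{0,1\}\). Then the term indexed by \(\sigma\) is \((-1)^{E(\sigma)}\), where \(E(\sigma)=\sum_i e_{i,\sigma(i)}\). Now \(\per(X)=6-2t\), where \(t\) is the number of \(\sigma\) with \(E(\sigma)\) odd. This is zero only if \(t=3\).

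The key step is to show that \(t\) is always even. Partition \(S_3\) into the three even permutations (the identity and the two \(3\)-cycles) and the three odd ones (the transpositions composed appropriately). The three even permutations together cover each cell \((i,j)\) exactly once, because the cyclic shifts partition the \(3\times 3\) grid. The same holds for the three odd permutations, the anti-diagonal shifts. Therefore
\[
\sum_{\sigma\ \text{even}} E(\sigma)=\sum_{i,j}e_{ij}=\sum_{\sigma\ \text{odd}}E(\sigma).
\]
Summing over all six permutations gives \(\sum_{\sigma\in S_3}E(\sigma)=2\sum_{i,j}e_{ij}\), which is even. The number of \(\sigma\) with \(E(\sigma)\) odd has the same parity as this total, so \(t\) is even. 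Hence \(t\neq 3\), and \(\per(X)=6-2t\in\{6,2,-2,-6\}\) is nonzero.

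There is no real obstacle. The only point to verify is the covering property: each of the two classes of three permutations forms a Latin-square decomposition of the grid, so each class covers every cell exactly once. One could alternatively verify the statement by brute force over the \(512\) matrices, or reduce first by row and column sign scalings, which only change the sign of \(\per\), to the case where the first row and first column are all \(+1\). That leaves \(16\) cases.
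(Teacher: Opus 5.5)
Your proof is correct, and it takes a genuinely different route from the paper's. The paper uses row and column sign changes to normalize \(X\) so that its first row and first column are all \(1\). It then computes \(\per(X)=a+b+c+d+ad+bc\) and checks the four cases for \((a,b)\). You instead use a parity invariant: \(\per(X)=6-2t\) with \(t\) even, because \(\sum_{\sigma}E(\sigma)=2\sum_{i,j}e_{ij}\).

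The even/odd Latin-square decomposition is more than you need. It suffices that each cell lies in exactly \((3-1)!=2\) permutations. Then the product of the six terms is \(\prod_{i,j}x_{ij}^2=1\), so an even number of them equal \(-1\).

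Your route avoids normalization and casework, and it gives the sharper fact \(\per(X)\equiv 2\pmod 4\), i.e. \(\per(X)\in\{\pm2,\pm6\}\). It also shows why \(3\times3\) blocks are special: the argument needs both \((n-1)!\) even and \(n!\equiv 2\pmod 4\), and this holds only for \(n=3\).

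The paper's route has a different advantage: the same normal form is reused in Lemma~\ref{lem:det-192}. There, counting the \(192\) nonsingular sign matrices has no comparable parity shortcut and genuinely needs the case check.

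One cosmetic point: delete your opening sentence. \(\per(X)\) is never odd, as you yourself note in the next line.
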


\begin{proof}
Multiplying rows and columns of a matrix by $+1$ or $-1$ changes the permanent by an overall sign and hence preserves whether the
permanent is zero.
Every \(3\times3\) sign matrix can be normalized by row and column sign
changes to the form
\[
    X=
    \begin{pmatrix}
        1&1&1\\
        1&a&b\\
        1&c&d
    \end{pmatrix},
    \qquad a,b,c,d\in\{\pm1\}.
\]
For this matrix,
\[
    \per(X) =a+b+c+d+ad+bc.
\]
We check the four cases according to \((a,b)\):
\[
\begin{array}{c|c}
(a,b) & \per(X) \\
\hline
(1,1) & 2+2c+2d\\
(1,-1) & 2d\\
(-1,1) & 2c\\
(-1,-1) & -2.
\end{array}
\]
None of these expressions is zero for any \(c,d\in\{\pm1\}\).  Therefore every
matrix in \(\Sigma_3\) has nonzero permanent.
\end{proof}

\begin{lemma}
\label{lem:det-192}
For any fixed sign matrix \(S\in\{\pm1\}^{3\times3}\),
\(
    \det(S\circ X)\neq0
\)
for exactly \(192\) choices of \(X\in\Sigma_3\).
\end{lemma}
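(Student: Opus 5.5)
First reduce to the case \(S=J\), the all-ones matrix. The map \(X\mapsto S\circ X\) is a bijection of \(\Sigma_3\) onto itself, since entrywise multiplication by a fixed sign matrix is an involution. Hence the number of \(X\in\Sigma_3\) with \(\det(S\circ X)\neq0\) equals the number of \(Y\in\Sigma_3\) with \(\det(Y)\neq0\). It therefore suffices to show that exactly \(192\) of the \(512\) sign matrices are nonsingular, or equivalently that exactly \(320\) are singular.

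Next, normalize as in the proof of Lemma~\ref{lem:per-nonzero}. The group \(\{\pm1\}^3\times\{\pm1\}^3\) of row and column sign changes acts on \(\Sigma_3\) and changes the determinant only by a sign, so it preserves singularity. Each \(Y\) can be brought uniquely to the form
\[
Y'=\begin{pmatrix}1&1&1\\1&a&b\\1&c&d\end{pmatrix}:
\]
the sign of each row is fixed by making its first entry \(1\), and then the column signs of columns \(2,3\) are fixed by the first row. More precisely, the action of the \(2^6\) sign-change pairs has kernel \(\{(I,I),(-I,-I)\}\), so each orbit has exactly \(32\) elements. There are \(2^4=16\) normalized representatives, giving \(16\cdot 32=512\), which is consistent.

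It remains to count the singular normalized matrices. Subtracting the first row from the others gives
\[
\det Y'=(a-1)(d-1)-(b-1)(c-1).
\]
Each factor \(a-1\) and so on is either \(0\) or \(-2\). So \(\det Y'=0\) exactly when the indicator of \(\{a=-1 \text{ and } d=-1\}\) equals the indicator of \(\{b=-1 \text{ and } c=-1\}\). Write \(p=\Pr(a=d=-1)=1/4\) over the \(16\) choices of \((a,b,c,d)\).
\begin{itemize}
\item Both indicators vanish in \(3\cdot3=9\) cases.
\item Both indicators equal \(1\) in \(1\) case.
\end{itemize}
Hence \(10\) normalized matrices are singular and \(6\) are nonsingular. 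Multiplying by the orbit size gives \(6\cdot32=192\) nonsingular sign matrices, as claimed.

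The step needing the most care is the orbit-size claim, namely that normalization is unique and each orbit has exactly \(32\) elements. This follows because the only sign-change pairs fixing a matrix with all entries nonzero are \(\pm(I,I)\). The determinant case count itself is a routine check.
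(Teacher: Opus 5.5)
Your proof is correct and follows the paper's argument: reduce to counting nonsingular sign matrices via the involution $X\mapsto S\circ X$, normalize the first row and column to ones, find that $6$ of the $16$ normalized matrices are nonsingular, and multiply by the orbit size $32$. The differences are cosmetic. You get the $6$ from the factored form $\det Y'=(a-1)(d-1)-(b-1)(c-1)$ rather than the paper's case table on $(a,b)$, and you justify the orbit size with an explicit stabilizer computation.
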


\begin{proof}
For any fixed \(S\in\{\pm1\}^{3\times3}\), the map
\(
    X\mapsto S\circ X
\)
is its own inverse on \(\Sigma_3\).  Hence it is a bijection of
\(\Sigma_3\). Thus it is enough to count the nonsingular
matrices in \(\Sigma_3\).
Every \(3\times3\) sign matrix can be normalized by row and column sign
changes to the form
\[
    X=
    \begin{pmatrix}
        1&1&1\\
        1&a&b\\
        1&c&d
    \end{pmatrix},
    \qquad a,b,c,d\in\{\pm1\}.
\]
Row and column sign changes preserve singularity. For the normalized
matrix,
\[
    \det (X)=ad-bc-d+b+c-a.
\]
We now count when this is nonzero.  According to the four possible values
of \((a,b)\), we get
\[
\begin{array}{c|c|c}
(a,b) & \det(X) & \text{number of nonzero choices of }(c,d)\\
\hline
(1,1) & 0 & 0\\
(1,-1) & 2c-2 & 2\\
(-1,1) & 2-2d & 2\\
(-1,-1) & 2c-2d & 2.
\end{array}
\]
Thus exactly
\(
    0+2+2+2=6
\)
of the \(16\) normalized matrices are nonsingular.  
There are \(2^3\) choices of row signs and \(2^3\) choices of column signs.
However, simultaneously reversing all row signs and all column signs does
not change the resulting matrix. Hence each normalized matrix has exactly
\[
    \frac{2^3\cdot2^3}{2}
    =
    2^{3+3-1}
    =
    32
\]
distinct row-and-column sign variants.
Hence the number of nonsingular
\(3\times3\) sign matrices is
\[
    6\cdot32=192.
\]
Therefore, for every fixed sign matrix \(S\), the determinant
\(\det(S\circ X)\) is nonzero for exactly \(192\) choices of
\(X\in\Sigma_3\).
\end{proof}

\subsection{Block amplification}

We now use Lemmas~\ref{lem:per-nonzero} and~\ref{lem:det-192} on several
independent \(3\times3\) blocks. The idea is to test a hypothetical expression for the permanent as a linear
combination of determinants obtained by changing signs of entries.

If \(B_1,\ldots,B_r\) are matrices, we write
\(
    B_1\oplus\cdots\oplus B_r
\)
for the block diagonal matrix with diagonal blocks
\(B_1,\ldots,B_r\) and all off-diagonal blocks equal to zero.
Let
\[
    B=B_1\oplus B_2\oplus\cdots\oplus B_r
    =
    \begin{pmatrix}
        B_1 & 0 & \cdots & 0\\
        0 & B_2 & \cdots & 0\\
        \vdots & \vdots & \ddots & \vdots\\
        0 & 0 & \cdots & B_r
    \end{pmatrix},
\]
where each \(B_j\) is a \(3\times3\) matrix whose entries are distinct
variables, and the variables appearing in different blocks are also
distinct. The following theorem amplifies the \(3\times3\) permanent and determinant
counts to obtain an exponential lower bound.
\begin{theorem}
\label{thm:block-amplification}
Let \(B=B_1\oplus\cdots\oplus B_r\) be as above.  If
\begin{equation}
\label{eq:block-identity}
    \per(B)=\sum_{i=1}^k c_i\det(S_i\circ B)
\end{equation}
where \(c_i\in\mathbb R\) and
\(S_i\in\{\pm1\}^{3r\times3r}\), then
\(
    k\ge \left(\frac83\right)^r.
\)
\end{theorem}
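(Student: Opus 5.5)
We will prove the bound by evaluating both sides of \eqref{eq:block-identity} at block-diagonal sign matrices and then counting. Let \(\Omega:=\Sigma_3^r\). Each tuple \(X=(X_1,\ldots,X_r)\in\Omega\) gives the substitution \(B\mapsto X_1\oplus\cdots\oplus X_r\). Since the identity is a polynomial identity in the entries of \(B\), it holds at every such substitution. The off-diagonal blocks of \(B\) are zero, so only the diagonal \(3\times3\) blocks of each \(S_i\) matter. Write \(T_{i,1},\ldots,T_{i,r}\) for these diagonal blocks of \(S_i\). Both permanent and determinant are multiplicative over block-diagonal matrices, so for every \(X\in\Omega\)
\[
\prod_{j=1}^r \per(X_j)=\sum_{i=1}^k c_i\prod_{j=1}^r\det(T_{i,j}\circ X_j).
\]
By Lemma~\ref{lem:per-nonzero}, the left-hand side is nonzero for every \(X\in\Omega\).

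Next we show that each term on the right vanishes on most of \(\Omega\). For a fixed \(i\), let \(Z_i\subseteq\Omega\) be the set of \(X\) with \(\prod_j\det(T_{i,j}\circ X_j)\neq0\). This set is a product of one set per block. By Lemma~\ref{lem:det-192}, each factor has exactly \(192\) elements out of \(512\). Hence
\[
|Z_i|=192^r=\left(\tfrac38\right)^r|\Omega|.
\]

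Finally we count. Every \(X\in\Omega\) makes the left-hand side nonzero, so at least one term on the right must be nonzero at \(X\). In other words, \(X\in Z_i\) for some \(i\), and therefore \(\Omega=\bigcup_{i=1}^k Z_i\). Taking cardinalities gives
\[
512^r\le k\cdot 192^r,
\]
that is, \(k\ge(8/3)^r\). The coefficients \(c_i\) play no role in this argument; only the supports matter.

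The main thing to verify carefully is the block factorization. We must check that the substituted matrix \(S_i\circ(X_1\oplus\cdots\oplus X_r)\) equals the block-diagonal matrix with blocks \(T_{i,j}\circ X_j\). This holds because the Hadamard product with a matrix that is zero off the diagonal blocks discards the off-diagonal entries of \(S_i\). Then \(\det\) and \(\per\) of a block-diagonal matrix equal the products over the blocks. Beyond this, the counting step is immediate.
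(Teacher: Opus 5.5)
Your proposal is correct and is essentially the paper's proof. You evaluate on the $512^r$ block-diagonal sign matrices, get nonvanishing of the left side from Lemma~\ref{lem:per-nonzero}, use block factorization with Lemma~\ref{lem:det-192} to show each determinant term is nonzero on exactly $192^r$ points, and then double count; your union bound $\Omega=\bigcup_i Z_i$ is the same counting as the paper's sum, over all $X$, of the number of nonzero terms.
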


\begin{proof}
We evaluate \eqref{eq:block-identity}
on block diagonal matrices
\(X=X_1\oplus\cdots\oplus X_r\), where \(X_j\in\Sigma_3\).  There are
\(|\Sigma_3|^r=512^r\) choices of \(X\).
For every such \(X\), the left-hand side of
\eqref{eq:block-identity} is nonzero, since \[
    \per(X)=\prod_{j=1}^r \per(X_j),
\]
and each factor is nonzero by Lemma~\ref{lem:per-nonzero}.
Now fix one determinant term \(\det(S_i\circ B)\).  
On a block diagonal matrix \(X=X_1\oplus\cdots\oplus X_r\), this determinant factors as
\[
    \det(S_i\circ X)
    =
    \prod_{j=1}^r \det(S_{i,j}\circ X_j),
\]
where \(S_{i,j}\) is the restriction of \(S_i\) to the \(j\)-th
\(3\times3\) diagonal block.  By Lemma~\ref{lem:det-192}, for each fixed
\(j\), the factor \(\det(S_{i,j}\circ X_j)\) is nonzero for exactly
\(192\) choices of \(X_j\in\Sigma_3\). Therefore
\(\det(S_i\circ X)\) is nonzero for exactly \(192^r\) of the \(512^r\)
choices of \(X\). 

Since the left-hand side of \eqref{eq:block-identity} is nonzero for every
choice of \(X\), at least one determinant term on the right-hand side must
be nonzero. Thus, for every such \(X\),
\[
    1\le
    \left|
    \left\{
        i\in[k]:
        \det(S_i\circ X)\neq0
    \right\}
    \right|.
\]
Summing over all \(512^r\) choices of \(X\), and using the fact that each
fixed determinant term is nonzero for exactly \(192^r\) choices of \(X\),
we obtain
\[
    512^r
    \le
    \sum_X
    \left|
    \left\{
        i\in[k]:
        \det(S_i\circ X)\neq0
    \right\}
    \right|
    =
    k\,192^r.
\]
Therefore
\[
    k\ge
    \left(\frac{512}{192}\right)^r
    =
    \left(\frac83\right)^r.
    \qedhere
\]
\end{proof}

\begin{remark}
The choice of \(3\times3\) blocks is deliberate. For larger blocks, the
same argument compares the number of sign matrices with nonzero permanent
to the number with nonzero signed determinant. The \(3\times3\) count gives
the ratio \(8/3\), and the analogous \(4\times4\) and \(5\times5\) counts
give smaller exponential rates per matrix size.
\end{remark}

\begin{proof}[Proof of Theorem~\ref{thm:permanent-signed-det}]
Let \(A=(a_{ij})\) be an \(n\times n\) matrix of distinct variables. For each \(i\), write \(A_i=S_i\circ A\), where
\(S_i\in\{\pm1\}^{n\times n}\). Then 
\begin{equation}
\label{eq:full-signed-det}
    \per(A)=\sum_{i=1}^k c_i\det(S_i\circ A),
\end{equation}
where \(c_i\in\mathbb R\). Set \(r:=\lfloor n/3\rfloor\) and specialize \(A\) to \(B\oplus I_{n-3r}\),
where \(B:=B_1\oplus\cdots\oplus B_r\), and each \(B_j\) is a \(3\times3\)
matrix of distinct variables. Since
the permanent of a block diagonal matrix is the product of the permanents
of its blocks and \(\per(I_{n-3r})=1\), the left-hand side of \eqref{eq:full-signed-det} specializes to
\(
    \per(B).
\)
For each \(i\), let \(S_i'\) be the restriction of \(S_i\) to the
\(3r\times 3r\) block corresponding to \(B\).  If \(n-3r=0\), set
\(\delta_i=1\).  Otherwise, let \(T_i\) be the restriction of \(S_i\) to
the block corresponding to \(I_{n-3r}\), and set
\(
    \delta_i:=\det(T_i\circ I_{n-3r})\in\{\pm1\}.
\)
Then
\[
    \det(S_i\circ (B\oplus I_{n-3r}))
    =
    \delta_i\det(S_i'\circ B).
\]
Thus \eqref{eq:full-signed-det} specializes to
\[
    \per(B)
    =
    \sum_{i=1}^k c_i\delta_i\det(S_i'\circ B).
\]
By Theorem~\ref{thm:block-amplification},
\[
    k\ge \left(\frac83\right)^r.
\]
Since \(k\) is an integer, the result follows.
\end{proof}

\section{Graph-Theoretic Consequences}\label{sec:consequences}

We now apply the matrix lower bound to obtain lower bounds for Pfaffian
numbers of graphs.

\subsection{Disjoint \(K_{3,3}\) blocks and genus}
We 
write \(G^{\sqcup r}\) for the disjoint
union of \(r\) copies of \(G\).  We first apply the argument to
disjoint copies of \(K_{3,3}\).

\begin{proposition}
\label{prop:disjoint-blocks}
For every \(r\ge1\),
\[
    \pf(K_{3,3}^{\sqcup r})
    \ge
    \left\lceil\left(\frac83\right)^r\right\rceil.
\] 
\end{proposition}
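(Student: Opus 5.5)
The plan is to reduce directly to Theorem~\ref{thm:block-amplification} via Lemma~\ref{lem:bipartite-signed-determinants}. Let \(G:=K_{3,3}^{\sqcup r}\), and let the \(j\)-th copy of \(K_{3,3}\) have colour classes \(U_j=\{u_{j,1},u_{j,2},u_{j,3}\}\) and \(V_j=\{v_{j,1},v_{j,2},v_{j,3}\}\). Then \(G\) is bipartite with colour classes \(U=U_1\sqcup\cdots\sqcup U_r\) and \(V=V_1\sqcup\cdots\sqcup V_r\), each of size \(3r\). We order the rows of the biadjacency matrix block by block as \(U_1,\ldots,U_r\), and the columns likewise as \(V_1,\ldots,V_r\). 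With this ordering, \(B_G\) is exactly the block diagonal matrix \(B=B_1\oplus\cdots\oplus B_r\). Here each \(B_j\) is the \(3\times3\) matrix of the nine distinct edge variables of the \(j\)-th copy, because there are no edges between different copies. All variables are distinct, as Theorem~\ref{thm:block-amplification} requires.

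Next, let \(k=\pf(G)\). Lemma~\ref{lem:bipartite-signed-determinants} supplies sign matrices \(S_1,\ldots,S_k\in\{\pm1\}^{3r\times3r}\) and reals \(c_i\) with
\[
    \per(B_G)=\sum_{i=1}^k c_i\det(S_i\circ B_G).
\]
This uses the vertex ordering \(U\) first and then \(V\), as in Lemma~\ref{lem:bipartite-pf-det}. Since that lemma allows any enumeration within \(U\) and within \(V\), we may take the blockwise one above. This is precisely identity~\eqref{eq:block-identity}, so Theorem~\ref{thm:block-amplification} gives \(k\ge(8/3)^r\). Because \(k\) is an integer, \(k\ge\lceil(8/3)^r\rceil\).

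The only point needing care is checking that the row and column ordering in Lemma~\ref{lem:bipartite-pf-det} is compatible with the block structure. Lemma~\ref{lem:bipartite-pf-det} fixes an arbitrary labeling \(u_1,\ldots,u_n\), \(v_1,\ldots,v_n\), so we simply choose the labeling copy by copy. The entries of \(S_i\) outside the diagonal blocks multiply zero entries of \(B_G\), so they play no role. No other obstacle arises.
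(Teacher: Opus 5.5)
Your proposal is correct and matches the paper's proof: you identify the biadjacency matrix of \(K_{3,3}^{\sqcup r}\) as the block diagonal \(B_1\oplus\cdots\oplus B_r\), apply Lemma~\ref{lem:bipartite-signed-determinants} and then Theorem~\ref{thm:block-amplification}, and finish with integrality of \(k\). Your extra check that the blockwise vertex ordering is compatible with Lemma~\ref{lem:bipartite-pf-det} is a detail the paper leaves implicit.
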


\begin{proof}
Let
\(
    G_r=K_{3,3}^{\sqcup r}.
\)
The weighted biadjacency matrix of \(G_r\) is
\[
    B=B_1\oplus\cdots\oplus B_r,
\]
where \(B_i\) denotes the weighted biadjacency matrix of the \(i\)-th copy
of \(K_{3,3}\).  Thus each \(B_i\) is a \(3\times3\) matrix of distinct
edge variables, and different copies of \(K_{3,3}\) contribute disjoint
sets of variables.
Let \(k=\pf(G_r)\). By
Lemma~\ref{lem:bipartite-signed-determinants}, there are sign
matrices \(S_1,\ldots,S_k \in \{\pm 1\}^{3r\times 3r}\) and real coefficients \(c_1,\ldots,c_k\) such
that
\[
    \per(B)=\sum_{i=1}^k c_i\det(S_i\circ B).
\]
By Theorem~\ref{thm:block-amplification},
\[
    k\ge \left(\frac83\right)^r.
\]
Since \(k\) is an integer, the result follows.
\end{proof}

The disjoint \(K_{3,3}\)-block construction gives a genus lower bound.
We use the standard facts that
\(\gamma(K_{3,3})=1\), and that orientable genus is additive over connected
components:
\[
    \gamma(G_1\sqcup\cdots\sqcup G_r)
    =
    \sum_{i=1}^r \gamma(G_i),
\]
see \cite{BattleHararyKodamaYoungs1962}.
Recall that \(\pfmax(g)\) denotes the maximum Pfaffian number among all
graphs of orientable genus at most \(g\).

\begin{proof}[Proof of Theorem~\ref{thm:intro-genus-main}]
Let
\[
    G=K_{3,3}^{\sqcup g}.
\]
By Proposition~\ref{prop:disjoint-blocks},
\[
    \pf(G)\ge
    \left\lceil\left(\frac83\right)^g\right\rceil .
\]
Since \(\gamma(K_{3,3})=1\), we have \(\gamma(G)=g\). Therefore
\[
    \pfmax(g)\ge\pf(G)\ge
    \left(\frac83\right)^g .
    \qedhere
\]
\end{proof}

\subsection{Matching-covered graphs of genus \(g\)}
We now strengthen the preceding result by showing that the same lower
bound can be witnessed by cubic bipartite matching-covered graphs.
Although \(K_{3,3}^{\sqcup r}\) is disconnected, and hence not
matching-covered, it serves as the basic obstruction underlying the
construction. To transfer its lower bound to more structured graphs, we
will use invariance of the Pfaffian number under bisubdivision.
A graph \(H\) is a \emph{bisubdivision} of a graph \(G\) if \(H\) is
obtained from \(G\) by replacing edges with paths of odd length.
Junchaya, Miranda, and Lucchesi show that bisubdivision preserves the
Pfaffian number for matching-covered graphs
\cite{JunchayaMirandaLucchesi2026}. We will use the following extension
to disjoint unions.

\begin{lemma}\label{lem:bisubdivision}
Let \(G=G_1\sqcup\cdots\sqcup G_r\), where each \(G_i\) is
matching-covered, and let \(H\) be a bisubdivision of \(G\). Then
\[
    \pf(H)=\pf(G).
\]
\end{lemma}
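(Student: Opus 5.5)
We prove the two inequalities \(\pf(H)\le\pf(G)\) and \(\pf(G)\le\pf(H)\) separately, working one subdivided edge at a time. By induction on the total number of inserted vertices it suffices to treat the case where \(H\) arises from \(G\) by replacing a single edge \(e=uv\) of some component \(G_i\) by a path \(u\,a\,b\,v\) of length \(3\). The new edges are \(e_1=ua\), \(e_2=ab\), \(e_3=bv\). Note that \(H\) is again a disjoint union of matching-covered graphs, since bisubdividing an edge of a matching-covered graph keeps it matching-covered. So the induction hypothesis applies at each stage, and we never need the cited result for connected graphs beyond its proof idea.

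The key observation is a bijection between perfect matchings. Perfect matchings of \(H\) containing \(e_2\) correspond to perfect matchings of \(G\) avoiding \(e\). Perfect matchings of \(H\) containing \(e_1,e_3\) correspond to those of \(G\) containing \(e\). Hence
\[
\PM(H)\big|_{x_{e_2}=1}=\PM(G)\big|_{x_e=x_{e_1}x_{e_3}}.
\]

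For \(\pf(G)\le\pf(H)\), take a representation \(\PM(H)=\sum_{i=1}^k c_i\Pf(A_{D_i})\). Set \(x_{e_2}=1\) and \(x_{e_3}=1\), and rename \(x_{e_1}\) as \(x_e\). Order the vertices so that \(a,b\) come last, and expand each Pfaffian along \(a\) and \(b\). Each \(\Pf(A_{D_i})\) becomes \(\pm\Pf\) of the skew matrix on \(V(G)\) in which the entry \(uv\) equals \(\pm x_e\), with the sign determined by the directions of \(e_1,e_2,e_3\) in \(D_i\). This is a standard Pfaffian reduction (a Schur-complement step eliminating \(a,b\)), and it yields an orientation \(D_i'\) of \(G\). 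The overall signs are absorbed into the coefficients, giving a representation of \(\PM(G)\) with \(k\) terms.

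For \(\pf(H)\le\pf(G)\), reverse the procedure. Given orientations \(D_i'\) of \(G\), orient \(e_1,e_2,e_3\) so that the induced sign of \(e\) matches \(D_i'\). Then \(\Pf(A_{D_i})=\pm\Pf(A_{D_i'})\big|_{x_e\to x_{e_1}x_{e_3}}\) on the \(e_1e_3\)-matchings, with \(x_{e_2}\) times the same sign on the remaining matchings. The sign consistency has to be checked: since all monomials scale by \(x_{e_2}\) or by \(x_{e_1}x_{e_3}\) in a fixed pattern, the signs should agree uniformly up to a constant factor absorbed into \(c_i\). The result is \(\PM(H)=\sum c_i'\Pf(A_{D_i})\).

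The main obstacle is the sign bookkeeping in the Pfaffian expansion along \(a,b\). We need the relative sign between \(\{e_1,e_3\}\)-matchings and \(e_2\)-matchings in \(D_i\) to be exactly reproduced by a single sign on \(e\) in \(D_i'\), independent of the rest of the matching. Here we rely on the fact that moving the pair \(a,b\) to the end of the vertex order costs a fixed parity. Matching-coveredness of the components enters only to guarantee that each edge lies in some perfect matching, so no edge variable is vacuous. This is why the hypothesis of the lemma is stated componentwise, and disconnectedness causes no extra difficulty because the Pfaffian and \(\PM\) both factor over components.
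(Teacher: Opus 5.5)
Your argument is correct, but it is organized differently from the paper's. The paper does not reduce to one edge. It quotes, from the proof of Junchaya, Miranda, and Lucchesi, that every orientation of a matching-covered component \(G_\ell\) lifts to an orientation of \(H_\ell\) whose matching signs differ by a fixed factor \(\varepsilon^\ell\). It then multiplies these factors over the components using a component-by-component vertex order, and handles \(\pf(G)\le\pf(H)\) as ``the same argument in reverse.''

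You instead induct down to a single length-3 subdivision and prove both inequalities from scratch. Your direction \(\pf(G)\le\pf(H)\) specializes \(x_{e_2}=x_{e_3}=1\) and eliminates \(a,b\); this is cleaner than inverting a lifting. Because the modification is local, the component structure plays no role. In fact matching-coveredness is never used: if \(e\) lies in no perfect matching, the \(x_e\)-parts simply vanish. So your remark about non-vacuous variables can be dropped, and your route proves a slightly more general statement. The paper gains brevity, at the price of relying on the internals of a cited proof.

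The one place to tighten is the lifting direction, where you leave the sign consistency as something that ``should'' hold. Your own elimination step settles it if you perform it before specializing. Let \(s_2,t_1,t_3\in\{\pm1\}\) record the directions of \(ab\), \(ua\), \(bv\), positive for \(a\to b\), \(u\to a\), \(b\to v\). The Schur-complement identity then gives \(\Pf(A_{\widetilde D})=s_2x_{e_2}\Pf(A')\). Here \(A'\) is the restriction of \(A_{\widetilde D}\) to \(V(G)\), except that \(A'_{uv}=-A'_{vu}=s_2t_1t_3\,x_{e_1}x_{e_3}/x_{e_2}\).

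Equivalently, and this is the fixed parity your last paragraph is reaching for, the lists \((\ldots,u,a,b,v)\) and \((\ldots,u,v,a,b)\) differ by an even 3-cycle. Hence \(\sign_{\widetilde D}(\widetilde M)=t_1t_3\sign_D(M)\) when \(e\in M\) and \(D\) orients \(e\) from \(u\) to \(v\), and \(\sign_{\widetilde D}(\widetilde M)=s_2\sign_D(M)\) when \(e\notin M\), independent of the rest of \(M\). So choose the directions of \(e_1,e_2,e_3\) with \(s_2t_1t_3=+1\) exactly when \(D_i'\) orients \(e\) from \(u\) to \(v\).

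Finally, to transfer \(\PM(G)=\sum_i c_i\Pf(A_{D_i'})\) to \(H\), say explicitly that both sides are linear in \(x_e\). Then compare the coefficients of \(1\) and \(x_e\); equivalently, substitute \(x_e\mapsto x_{e_1}x_{e_3}/x_{e_2}\) and multiply through by \(x_{e_2}\).
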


\begin{proof}
Write \(H=H_1\sqcup\cdots\sqcup H_r\), where \(H_\ell\) is the
corresponding bisubdivision of \(G_\ell\), and order the vertices
component by component. There is a bijection between the perfect
matchings of \(G\) and \(H\). Indeed, when an edge is replaced by an odd
path, the matching on the internal vertices is uniquely determined
according to whether or not the original edge belongs to the matching.

The proof of bisubdivision invariance in
\cite{JunchayaMirandaLucchesi2026} shows that, for every orientation
\(D^\ell\) of \(G_\ell\), there is an orientation
\(\widetilde D^\ell\) of \(H_\ell\) and a sign
\(\varepsilon^\ell\in\{\pm1\}\) such that corresponding perfect
matchings have signs differing by the fixed factor
\(\varepsilon^\ell\).
Let \(k=\pf(G)\), and write
\[
    \PM(G)=\sum_{i=1}^k c_i\Pf(A_{D_i}).
\]
For each \(i\), apply the preceding correspondence to the restriction of
\(D_i\) to each component \(G_\ell\), and combine the resulting
orientations to obtain an orientation \(\widetilde D_i\) of \(H\).
Since the vertices are ordered component by component, the matching
signs multiply over the components. Thus, for some
\(\varepsilon_i\in\{\pm1\}\), corresponding perfect matchings \(M\) of
\(G\) and \(\widetilde M\) of \(H\) satisfy
\[
    \sign_{\widetilde D_i}(\widetilde M)
    =
    \varepsilon_i\sign_{D_i}(M).
\]
Hence
\[
    \PM(H)
    =
    \sum_{i=1}^k c_i\varepsilon_i\Pf(A_{\widetilde D_i}),
\]
so \(\pf(H)\le\pf(G)\). 
For the reverse inequality, start with a minimum Pfaffian representation
of \(H\). Applying the reverse direction of the same argument to each
orientation, component by component, gives a Pfaffian representation of
\(G\) with the same number of terms, up to fixed sign changes in the
coefficients. Hence
\(
    \pf(G)\le\pf(H).
\) Therefore
\[
    \pf(H)=\pf(G). \qedhere
\]
\end{proof}

The disjoint-block lower bound extends immediately to any graph containing
such blocks as a conformal subgraph.

\begin{corollary}
\label{cor:conformal-k33}
Let \(G\) be a graph. Suppose \(G\) contains a conformal subgraph \(H\)
with exactly \(r\) connected components, each of which is a
bisubdivision of \(K_{3,3}\). Then
\[
    \pf(G)\ge
    \left\lceil\left(\frac83\right)^r\right\rceil .
\]
\end{corollary}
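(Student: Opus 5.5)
The plan is to chain three earlier results: Proposition~\ref{prop:conformal}, Lemma~\ref{lem:bisubdivision}, and Proposition~\ref{prop:disjoint-blocks}.

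First, since \(H\) is a conformal subgraph of \(G\), Proposition~\ref{prop:conformal} gives \(\pf(G)\ge\pf(H)\). It therefore suffices to show \(\pf(H)\ge\lceil(8/3)^r\rceil\).

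Next, write \(H=H_1\sqcup\cdots\sqcup H_r\), where each \(H_\ell\) is a bisubdivision of \(K_{3,3}\). Bisubdivisions of the distinct components can be chosen independently, so \(H\) is a bisubdivision of \(G'=K_{3,3}^{\sqcup r}\). Each component of \(G'\) is \(K_{3,3}\), which is matching-covered: it is connected, and every edge lies in a perfect matching, since deleting its two endpoints leaves \(K_{2,2}\), which has a perfect matching. Lemma~\ref{lem:bisubdivision} then applies and gives \(\pf(H)=\pf(K_{3,3}^{\sqcup r})\).

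Finally, Proposition~\ref{prop:disjoint-blocks} gives \(\pf(K_{3,3}^{\sqcup r})\ge\lceil(8/3)^r\rceil\). Combining the three steps yields
\[
    \pf(G)\ge\pf(H)=\pf(K_{3,3}^{\sqcup r})\ge\left\lceil\left(\frac83\right)^r\right\rceil .
\]

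The only step needing care is applying Lemma~\ref{lem:bisubdivision}. One must check that the correspondence between the components of \(H\) and the copies of \(K_{3,3}\) yields a single bisubdivision of the disjoint union, and that the matching-covered hypothesis holds for each component; both were checked above.

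Alternatively, the lemma could be avoided by contracting each odd path directly. Set the internal edge variables to \(1\) and observe that the biadjacency structure reduces to that of \(K_{3,3}^{\sqcup r}\), up to fixed signs. Going through the lemma as stated is cleaner, so I will use that route.
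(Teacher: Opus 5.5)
Your proposal is correct and matches the paper's proof, which uses the same chain \(\pf(G)\ge\pf(H)=\pf(K_{3,3}^{\sqcup r})\ge\lceil(8/3)^r\rceil\) via Proposition~\ref{prop:conformal}, Lemma~\ref{lem:bisubdivision}, and Proposition~\ref{prop:disjoint-blocks}. Your explicit checks that \(K_{3,3}\) is matching-covered and that \(H\) is a bisubdivision of \(K_{3,3}^{\sqcup r}\) verify the lemma's hypotheses, which the paper leaves implicit.
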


\begin{proof}
By Proposition~\ref{prop:conformal}, Lemma~\ref{lem:bisubdivision}, and
Proposition~\ref{prop:disjoint-blocks},
\[
    \pf(G)\ge\pf(H)
    =\pf(K_{3,3}^{\sqcup r})
    \ge
    \left\lceil\left(\frac83\right)^r\right\rceil .\qedhere
\]
\end{proof}

This shows that several disjoint \(K_{3,3}\) obstructions inside a
conformal subgraph force the Pfaffian number to grow exponentially.
We now show that this remains true even when these
obstructions are combined into a single matching-covered graph.
In fact, the resulting graph can be chosen to be bipartite and cubic.
A graph is \emph{cubic} if every vertex has degree \(3\).

\begin{theorem}
\label{thm:cubic-matching-covered}
For every \(g\ge1\), there exists a cubic bipartite
matching-covered graph \(G_g\) of orientable genus exactly \(g\) such that
\[
    \pf(G_g)\ge
    \left\lceil\left(\frac83\right)^g\right\rceil .
\]
\end{theorem}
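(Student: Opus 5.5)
Take \(r=g\) copies of \(K_{3,3}\), join them into one cubic bipartite matching-covered graph \(G_g\) whose genus is exactly \(g\), and then apply Corollary~\ref{cor:conformal-k33}.

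\textbf{Construction.} Let \(H^{(1)},\ldots,H^{(g)}\) be copies of \(K_{3,3}\). In each copy choose one edge \(a_jb_j\), with \(a_j\) in the colour class \(U\) and \(b_j\) in \(V\). Delete it and bisubdivide it into a path \(a_j x_j y_j b_j\) of length \(3\). The new vertices \(x_j\in V\) and \(y_j\in U\) have degree \(2\). Arrange the copies cyclically. Add the edges \(x_jy_{j+1}\) for \(j=1,\ldots,g\), indices mod \(g\). These edges join a \(V\)-vertex to a \(U\)-vertex, so bipartiteness is kept, and every vertex now has degree \(3\). For \(g=1\) this edge would be \(x_1y_1\), which duplicates an existing edge. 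For \(g=2\) the cyclic arrangement is fine. For \(g=1\) I would instead take \(G_1=K_{3,3}\), which is already cubic, bipartite, matching-covered, of genus \(1\), and has \(\pf(G_1)\ge2=\lceil 8/3\rceil\) by Proposition~\ref{prop:disjoint-blocks}. For \(g\ge2\), \(G_g\) is connected. Bipartite cubic graphs are \(3\)-edge-colourable by K\H{o}nig's theorem, hence \(1\)-extendable, so \(G_g\) is matching-covered.

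\textbf{Conformal subgraph.} Let \(H\) be the union of the bisubdivided copies, that is, all vertices with the connecting edges \(x_jy_{j+1}\) removed. \(H\) is a spanning subgraph, and each component is a bisubdivision of \(K_{3,3}\), hence has a perfect matching. So \(H\) is conformal with exactly \(g\) components. Corollary~\ref{cor:conformal-k33} then gives \(\pf(G_g)\ge\lceil(8/3)^g\rceil\).

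\textbf{Genus.} The lower bound \(\gamma(G_g)\ge g\) holds because \(G_g\) contains \(H\), a disjoint union of \(g\) subdivisions of \(K_{3,3}\), and genus is additive over components and monotone under subgraphs and subdivisions. The upper bound \(\gamma(G_g)\le g\) is the main obstacle, since the connecting cycle of edges \(x_jy_{j+1}\) could force extra handles. I would first try to build an explicit embedding. Each subdivided \(K_{3,3}\) embeds on a torus, and on that torus the path \(a_jx_jy_jb_j\) lies on the boundary of some face, so both \(x_j\) and \(y_j\) lie on a common face \(F_j\). Cut a disk out of \(F_j\) in each torus. Connect the tori cyclically by tubes, the \(j\)-th tube joining \(F_j\) to \(F_{j+1}\), and route the edge \(x_jy_{j+1}\) along it. A chain of \(g\) tori joined this way is a connected sum of genus \(g\), provided only \(g-1\) tubes are used to connect and the last one does not add a handle. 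Here the cyclic closing tube would add a handle. So the closing edge \(x_gy_1\) must instead be routed through the faces \(F_j\) together with the existing tubes. I would try to draw every tube wide enough to carry two parallel edges, so that the closing edge runs back along the chain. To do this, each face \(F_j\) must contain \(x_j\), \(y_j\), and two tube ends in a compatible cyclic order. If this ordering fails, I would fall back to connecting the copies in a path rather than a cycle, fixing degrees at the ends with extra degree-adjusting edges inside each end copy. The cyclic arrangement is only my choice, so a different connection pattern is allowed. I would then verify the face-boundary orders by hand using Euler's formula.
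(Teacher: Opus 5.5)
\textbf{Gap: the genus upper bound.} Your graph is the paper's \(G_g\), and most of your argument agrees with the paper. The conformal spanning subgraph \(H\) together with Corollary~\ref{cor:conformal-k33} is exactly the paper's argument. Your K\H{o}nig edge-colouring proof of matching-coveredness is a valid and shorter alternative to the paper's explicit perfect matchings. (Minor slip: \(\lceil 8/3\rceil=3\), not \(2\). Proposition~\ref{prop:disjoint-blocks} already gives \(\pf(K_{3,3})\ge 3\), so the case \(g=1\) is unaffected.)

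The gap is \(\gamma(G_g)\le g\), which the statement requires (``genus exactly \(g\)''). You leave it as a tentative plan, raise an unresolved cyclic-order worry, and offer a fallback. The fallback does not work as stated. In a path arrangement \(y_1\in U\) still has degree \(2\), but every \(V\)-vertex of the first copy already has degree \(3\). So any edge added inside that copy creates a vertex of degree \(4\) or a parallel edge. Also, Euler's formula only certifies the genus of an embedding you have already written down; it does not replace constructing one.

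\textbf{How to close it.} There is no ordering obstruction at all. The tube ends are small disks you may place anywhere inside \(F_j\), for instance in a small half-disk of \(F_j\) next to the edge \(x_jy_j\). They are not points on the face boundary, so nothing has to be ordered. Each remaining step is easy:
\begin{itemize}
\item A tube is a cylinder, so it carries any two disjoint arcs from one end to the other, whatever their endpoints on the two circles.
\item In a middle face with holes \(T_{j-1},T_{j+1}\), draw the arcs \(y_j\to T_{j-1}\) and \(x_j\to T_{j+1}\). Cutting along them turns the twice-punctured disk into a disk whose boundary contains both tube circles. The through-arc \(T_{j+1}\to T_{j-1}\) of the closing edge \(x_gy_1\) is then just a chord.
\item In the end faces \(F_1\) and \(F_g\) you only need two disjoint arcs from the outer boundary to a single hole, which is trivial.
\end{itemize}
So the closing edge runs back along the \(g-1\) tubes without crossings, no extra handle is added, and \(\gamma(G_g)\le g\). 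Equivalently, as in the paper's proof, the faces \(F_j\) glued along the \(g-1\) tubes form a planar region, a sphere with \(g\) holes (the paper's ``region between the handles''). In that region the joining edges \(x_jy_{j+1}\) are drawn as a necklace around the holes in cyclic order.
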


\begin{proof}
For \(g=1\), take \(G_1=K_{3,3}\). Now assume \(g\ge2\). Take \(g\) copies
\(C_1,\ldots,C_g\) of \(K_{3,3}\). Write the bipartition of \(C_i\) as
\(
    U_i=\{a_i,b_i,c_i\},
\) and  \(
    V_i=\{\alpha_i,\beta_i,\gamma_i\}.
\)
In each copy \(C_i\), replace the edge \(a_i\alpha_i\) by the path
\[
    a_i-x_i-y_i-\alpha_i .
\]
Then add the edges
\[
    x_i y_{i+1},
    \qquad i=1,\ldots,g,
\]
where indices are taken modulo \(g\). We call these the \emph{joining edges}. The construction is shown in
Figure~\ref{fig:cubic-construction} for \(g=3\).
The graph \(G_g\) is connected and bipartite, with \(x_i\) in the same
bipartition class as \(\alpha_i\) and \(y_i\) in the same bipartition class
as \(a_i\). Since every vertex has degree \(3\), the graph \(G_g\) is cubic.

We now show that \(G_g\) is matching-covered. Let \(B_i\) be the subdivided
copy of \(K_{3,3}\) obtained from \(C_i\) by replacing \(a_i\alpha_i\) with
the path \(a_i-x_i-y_i-\alpha_i\). Each \(B_i\) is matching-covered. A perfect matching of \(K_{3,3}\) either lifts by replacing \(a_i\alpha_i\)
with the two edges \(a_ix_i\) and \(y_i\alpha_i\), or lifts by adding the
edge \(x_iy_i\). Hence every edge inside some \(B_i\) belongs to a perfect
matching of \(B_i\), and this matching extends to a perfect matching of
\(G_g\) by choosing perfect matchings in the other blocks and using no
joining edges.
It remains to cover the joining edges. The set
\[
    \{x_jy_{j+1}: j=1,\ldots,g\}
    \cup
    \{a_j\beta_j,\ b_j\alpha_j,\ c_j\gamma_j: j=1,\ldots,g\}
\]
is a perfect matching of \(G_g\), and it contains all joining edges. Thus
every edge of \(G_g\) belongs to a perfect matching. Since \(G_g\) is
connected, it is matching-covered.

\begin{figure}[ht!]
\centering
\resizebox{0.48\textwidth}{!}{
\begin{tikzpicture}[
    every node/.style={circle, draw, inner sep=1.15pt, font=\scriptsize},
    blocklabel/.style={draw=none, circle, font=\small},
    internal/.style={gray!55, line width=0.32pt},
    path/.style={black, line width=0.6pt},
    connector/.style={blue, line width=0.9pt}
]

\node (al1) at (-0.75,2.75) {$\alpha_1$};
\node (be1) at (-0.75,2.15) {$\beta_1$};
\node (ga1) at (-0.75,1.55) {$\gamma_1$};

\node (a1) at (0.75,2.75) {$a_1$};
\node (b1) at (0.75,2.15) {$b_1$};
\node (c1) at (0.75,1.55) {$c_1$};

\foreach \u in {a,b,c} {
    \foreach \v in {al,be,ga} {
        \draw[internal] (\u1) -- (\v1);
    }
}

\node (y1) at (-0.42,3.35) {$y_1$};
\node (x1) at (0.42,3.35) {$x_1$};

\draw[white, line width=3pt] (a1) -- (al1);
\draw[path] (a1) -- (x1) -- (y1) -- (al1);

\node[blocklabel] at (0,1.05) {$C_1\cong K_{3,3}$};

\foreach \i/\X/\Y in {2/3.05/-1.35,3/-3.05/-1.35} {

    \node (a\i) at (\X-0.75,\Y+0.6) {$a_{\i}$};
    \node (b\i) at (\X-0.75,\Y) {$b_{\i}$};
    \node (c\i) at (\X-0.75,\Y-0.6) {$c_{\i}$};

    \node (al\i) at (\X+0.75,\Y+0.6) {$\alpha_{\i}$};
    \node (be\i) at (\X+0.75,\Y) {$\beta_{\i}$};
    \node (ga\i) at (\X+0.75,\Y-0.6) {$\gamma_{\i}$};

    \foreach \u in {a,b,c} {
        \foreach \v in {al,be,ga} {
            \draw[internal] (\u\i) -- (\v\i);
        }
    }

    \node (x\i) at (\X-0.42,\Y+1.22) {$x_{\i}$};
    \node (y\i) at (\X+0.42,\Y+1.22) {$y_{\i}$};

    \draw[white, line width=3pt] (a\i) -- (al\i);
    \draw[path] (a\i) -- (x\i) -- (y\i) -- (al\i);

    \node[blocklabel] at (\X,\Y-1.05) {$C_{\i}\cong K_{3,3}$};
}

\draw[connector] (x1) to[out=-25,in=120,looseness=0.85] (y2);
\draw[connector] (x2) to[out=-165,in=-15,looseness=0.45] (y3);
\draw[connector] (x3) to[out=60,in=205,looseness=0.85] (y1);

\end{tikzpicture}
}
\caption{The graph \(G_3\).}
\label{fig:cubic-construction}
\end{figure}

Let \(H_g\) be the spanning subgraph obtained from \(G_g\) by deleting the
joining edges \(x_i y_{i+1}\). Then \(H_g\) is a bisubdivision of
\(K_{3,3}^{\sqcup g}\). By Lemma~\ref{lem:bisubdivision},
\[
    \pf(H_g)=\pf(K_{3,3}^{\sqcup g}).
\]
Since \(H_g\) is a spanning subgraph of \(G_g\) and has a perfect
matching, it is conformal.
Proposition~\ref{prop:conformal} therefore gives
\[
    \pf(G_g)\ge \pf(H_g).
\]
Together with Proposition~\ref{prop:disjoint-blocks}, this gives
\[
    \pf(G_g)
    \ge
    \left\lceil\left(\frac83\right)^g\right\rceil .
\]
Finally, we show that \(G_g\) has orientable genus exactly \(g\).
Each component of \(H_g\) is a bisubdivision of \(K_{3,3}\), and
subdividing edges does not change the orientable genus. Since
\(\gamma(K_{3,3})=1\) and orientable genus is additive over connected
components, we have
\(
    \gamma(H_g)=g.
\)
Since \(H_g\subseteq G_g\), genus monotonicity gives
\(
    \gamma(G_g)\ge g.
\)
For the reverse inequality, take an orientable surface with \(g\) handles
arranged cyclically. Embed the \(i\)-th copy of \(K_{3,3}\) on the
\(i\)-th handle so that the edge \(a_i\alpha_i\) is incident with
the region between the handles, and then subdivide this edge as
\(a_i-x_i-y_i-\alpha_i\). Thus \(x_i\) and \(y_i\) are accessible from
this region. The joining edges \(x_i y_{i+1}\) can then be drawn in this
region, in cyclic order, without crossings. Hence
\(
    \gamma(G_g)\le g.
\)
Therefore
\[
    \gamma(G_g)=g.
    \qedhere
\]
\end{proof}

\begin{remark}
One can also obtain non-bipartite matching-covered examples. For \(g\ge2\), start with the
graph \(G_g\) from Theorem~\ref{thm:cubic-matching-covered} and add the two
edges
\(
    a_1b_1 \text{ and } \alpha_1\beta_1 .
\)
Let the resulting graph be \(G_g^+\). Then \(G_g^+\) is non-bipartite,
because it contains the triangle
\(
    a_1b_1\beta_1a_1 .
\)
The graph \(G_g^+\) is still matching-covered. Indeed, all old edges already
belong to perfect matchings in \(G_g\) and the two new edges belong to the
perfect matching containing
\[
    a_1b_1,\quad
    \alpha_1\beta_1,\quad
    c_1\gamma_1,\quad
    x_1y_1,
\]
together with perfect matchings of all remaining blocks. Moreover, \(G_g^+\) also has genus \(g\). The genus cannot decrease because
\(G_g\subseteq G_g^+\), and it does not increase because the two new edges
can be drawn on the first handle. Finally, since \(G_g\) is a spanning subgraph of \(G_g^+\) and has a
perfect matching, it is conformal. Hence Proposition~\ref{prop:conformal} gives
\[
    \pf(G_g^+)\ge \pf(G_g)
    \ge
    \left\lceil\left(\frac83\right)^g\right\rceil .
\]
\end{remark}

\subsection{Complete bipartite and complete graphs}
Next we prove the lower bound for the Pfaffian number of complete bipartite graphs.

\begin{proof}[Proof of Theorem~\ref{thm:intro-complete-bipartite}]
Let \(A\) be the weighted biadjacency matrix of \(K_{n,n}\). Thus \(A\)
is an \(n\times n\) matrix whose entries are distinct variables, and
\[
    \PM(K_{n,n})=\per(A).
\]
Let \(k=\pf(K_{n,n})\). By
Lemma~\ref{lem:bipartite-signed-determinants}, there exist sign
matrices \(S_1,\ldots,S_k\in\{\pm1\}^{n\times n}\) and real coefficients
\(c_1,\ldots,c_k\) such that
\[
    \per(A)=\sum_{i=1}^k c_i\det(S_i\circ A).
\]
By Theorem~\ref{thm:permanent-signed-det},
\[
    k\ge
    \left\lceil
    \left(\frac83\right)^{\lfloor n/3\rfloor}
    \right\rceil . \qedhere
\]
\end{proof}

The complete graph bound follows because \(K_{2n}\) contains a spanning
copy of \(K_{n,n}\).

\begin{proof}[Proof of Theorem~\ref{thm:intro-complete-graphs}]
Split the \(2n\) vertices of \(K_{2n}\) into two parts of size \(n\).
The edges between the two parts form a spanning subgraph isomorphic to
\(K_{n,n}\). Since \(K_{n,n}\) has a perfect matching, it is conformal. Therefore, by
Proposition~\ref{prop:conformal},
\[
    \pf(K_{2n})\ge \pf(K_{n,n}).
\]
The result now follows from
Theorem~\ref{thm:intro-complete-bipartite}.
\end{proof}

In particular, for every even \(n\), the complete graph itself witnesses
the corresponding \(n\)-vertex lower bound.

\begin{corollary}
\label{cor:vertex-lower-bound}
For every even integer \(n\ge6\),
\[
    \pf(K_n)
    \ge
    \left\lceil
    \left(\frac83\right)^{\lfloor n/6\rfloor}
    \right\rceil .
\]
\end{corollary}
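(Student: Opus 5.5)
This follows directly from Theorem~\ref{thm:intro-complete-graphs}. Since \(n\) is even, write \(n=2m\). Then \(K_n=K_{2m}\), and the condition \(n\ge6\) gives \(m\ge3\), which is exactly the hypothesis of Theorem~\ref{thm:intro-complete-graphs}. Applying that theorem with \(m\) in place of \(n\) yields
\[
    \pf(K_n)=\pf(K_{2m})\ge\left\lceil\left(\frac83\right)^{\lfloor m/3\rfloor}\right\rceil .
\]

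The only remaining step is to rewrite the exponent in terms of \(n\). Since \(m=n/2\), we need \(\lfloor m/3\rfloor=\lfloor n/6\rfloor\). This holds exactly because \(n/6=(n/2)/3=m/3\). No inequality between floors is needed, because \(n\) is even and \(m\) is an integer. Substituting gives the stated bound.

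There is no real obstacle. The hypothesis \(n\ge6\) ensures \(m\ge3\), so Theorem~\ref{thm:intro-complete-graphs} applies, and the evenness of \(n\) makes \(K_n\) have a perfect matching and \(m\) an integer.

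For completeness, the chain of dependencies behind this is as follows. Proposition~\ref{prop:conformal} applied to the spanning \(K_{m,m}\) gives \(\pf(K_{2m})\ge\pf(K_{m,m})\). Lemma~\ref{lem:bipartite-signed-determinants} then turns a Pfaffian representation of \(K_{m,m}\) into a signed-determinant representation of the permanent of an \(m\times m\) matrix of distinct variables. Finally, Theorem~\ref{thm:permanent-signed-det} bounds the number of terms from below by \(\lceil(8/3)^{\lfloor m/3\rfloor}\rceil\).
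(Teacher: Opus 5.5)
Your proof is correct and is the same as the paper's: write \(n=2m\), apply Theorem~\ref{thm:intro-complete-graphs} to \(K_{2m}\), and use \(\lfloor m/3\rfloor=\lfloor n/6\rfloor\), which holds since \(m/3=n/6\). You also check that \(m\ge 3\), which the paper leaves implicit, and your summary of the dependency chain (conformal spanning \(K_{m,m}\), signed determinants, then the matrix lower bound) is accurate.
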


\begin{proof}
Write \(n=2m\). By Theorem~\ref{thm:intro-complete-graphs},
\[
    \pf(K_n)
    =
    \pf(K_{2m})
    \ge
    \left\lceil
    \left(\frac83\right)^{\lfloor m/3\rfloor}
    \right\rceil
    =
    \left\lceil
    \left(\frac83\right)^{\lfloor n/6\rfloor}
    \right\rceil . \qedhere
\]
\end{proof}

\section{Discussion}

We proved that Pfaffian numbers can grow exponentially with orientable genus.
Together with the Galluccio--Loebl--Tesler upper bound, this gives
\[
    \left(\frac83\right)^g
    \le
    \pfmax(g)
    \le
    4^g .
\]        
The examples can be chosen cubic, bipartite, and matching-covered.
We also obtain exponential lower bounds for complete bipartite graphs and even
complete graphs.
The remaining gap is to determine the correct exponential growth rate of
\(\pfmax(g)\). Equivalently, one would like to understand
\[
    \limsup_{g\to\infty} \pfmax(g)^{1/g}.
\]
Our results place this quantity between \(8/3\) and \(4\). It remains open
to narrow this gap and determine the correct exponential growth rate.
Another direction is to understand the Pfaffian numbers of specific graph families more precisely.
Our bounds give exponential lower bounds for both, but their exact growth
rates are not yet known.

\bibliographystyle{plainurl}
\bibliography{references}

\end{document}